\newcommand{\al}{\alpha}
\newcommand{\ga}{\gamma}
\newcommand{\eo}{{\varepsilon_0}}
\newcommand{\om}{\omega}
\newcommand{\be}{\beta}
\newcommand{\de}{{\delta}}
\DeclareMathOperator{\sgn}{sgn}
\newtheorem{theo}{Theorem}
\newtheorem{lem}{Lemma}
\begin{document}
\title[MSO limit laws for natural well orderings]{Monadic second order limit laws for  natural well orderings}
\author{Andreas Weiermann}
\address{Vakgroep Wiskunde: Analysis, logic and discrete mathematics\\
Krijgslaan 281 S8,\\
9000 Ghent,
Belgium} 
\date{}

\begin{abstract}
By combining classical results of B\"uchi, some elementary Tauberian theorems and some basic tools from
logic and combinatorics
we show that every
ordinal $\alpha$ with $\eo\geq \alpha\geq \omega^\omega$ 
satisfies a natural monadic second order limit law
and that every
ordinal $\alpha$ with $\omega^\omega>\alpha\geq \omega$ 
satisfies  a natural monadic second order Cesaro limit law.
In both cases we identify as usual $\al$ with the class of substructures $\{\be:\be<\al\}$.

We work in an additive setting where the norm function $N$ assigns to every ordinal $\alpha$ the number of occurrrences of the symbol $\omega$ in its Cantor normal 
form. This number is the same as the number of edges in the tree which is canonically associated with $\alpha$.

For a given $\al$ with $\om\leq \al\leq \eo$ the asymptotic probability of a monadic second order formula $\varphi$ from the language of linear orders is
$\lim_{n\to\infty} \frac{\#\{\be<\al: N\be=n\wedge \be\models \Phi\}}{\#\{\be<\al: N\be=n\}}$ if this limit exists. If this limit exists only in the Cesaro sense we speak
of the Cesaro asympotic probability of $\varphi$.

Moreover we prove monadic second order limit laws for the ordinal segments below below $\Gamma_0$ (where the norm function is extended appropriately)
and we indicate how this paper's results can be extended to larger ordinal segments and even to certain impredicative ordinal notation systems
having notations for uncountable ordinals. We also briefly indicate how to prove the corresponding multiplicative results for which the setting is defined relative to the Matula coding.

The results of this paper concerning ordinals not exceeding $\eo$ have been obtained partly in joint work with Alan R. Woods.

\end{abstract}
\keywords{Schur's Tauberian theorem, Hua's Tauberian theorem, B{\"u}chi's definability results, asymptotic density, ordinals, monadic second order logic, limit law, natural well orderings}
\subjclass{03F15, 05C30,60C05}

\maketitle
\section{Introduction} This paper concerns logical limit laws for infinite ordinals. It is based on methods and techniques from
the theory of logical limit laws for classes of finite structures and fundamental results about the monadic theory of ordinals by B\"uchi. 
For the analytic part we make use of techniques developed by Bell, Burris and Compton \cite{Bella,Bell, Burris}.

 In 2001 the author discussed the possibility of logical limit laws for ordinals with Kevin Compton at an AOFA-workshop in Tatihoo
and Compton very kindly suggested among other things to contact Alan Woods because Woods proved very general results about limit laws for finite trees \cite{Woods}. 
This initiated a very fruitful interaction between Woods and the author over the years.
Tragically Woods passed away untimely 
in december 2011. 

The cooperation with Woods led to a first publication about first order zero one laws and limit laws for ordinals \cite{WeiermannWoods} based 
on a mixture of results from \cite{Weiermann}, \cite{Woods} and \cite{Burris}.

In this article we move our focus from first order logic to monadic second order logic.
  B\"uchi already provided a very explicit description of the ordinal spectrum of monadic second order sentences. In this article
it is shown how this description can be combined with results from  \cite{Bella,Bell, Burris} and \cite{Weiermann}
to prove monadic second order limit laws and monadic second order Cesaro limit laws using elementary Tauberian methods.

Alan Woods had originally in mind to prove the monadic second order results regarding the ordinals not exceeding $\eo$ by using
Shelah's theory of additive colourings \cite{Shelah}. We believe that this will prove useful in future research which among other things 
could lead to an automata free proof of this paper's results.

\section{Some basic definitions} 
Let $\varepsilon_0$ be the least ordinal $\xi$ such that $\xi=\om^\xi$ where $\om$ refers to the first infinite ordinal.
The ordinal $\varepsilon_0$ plays an important role in the proof theory of first order arithmetic and related contexts (see, for example, \cite{Schuette})  but in this article
we consider this ordinal (which is identified with its segment of smaller elements) as an object which stands just on itself.

By seminal work of Cantor we know that every ordinal $\al<\eo$ can be written uniquely as
$\om^{\al_1}+\cdots+\om^{\al_n}$ where $\al_1\geq \ldots\geq \al_n$. This normal form allows to associate finite trees 
canonically to the ordinals below $\varepsilon_0$.

Indeed, by writing out the $\al_i$ hereditarily in a similar fashion every ordinal is associated with a unique
term representation. 
To each $\al<\eo$ we can therefore canonically associate a finite tree $T(\al)$ in a recursive manner as follows.
$T(0)$ is the singleton tree consisting only of its root and if $\al$ has Cantor normal form $\om^{\al_1}+\cdots+\om^{\al_n}$ (meaning that $\al_1\geq \ldots\geq \al_n$)
and if the $T(\al_i)$ are already constructed then let $T(\al)$ be the tree with immediate subtrees $T(\al_i)$ connected to a new root.
$T(\al)$ is then a rooted non planar finite tree.

Let $N(\al)$ be the number of edges in $T(\al)$ which is one plus the number of nodes in $T(\al)$. Then $N0=0$ and $N(\al)=n+N(\al_1)+\ldots+N(\al_n)$ if $\al$ has Cantor normal form
$\om^{\al_1}+\cdots+\om^{\al_n}$. In other words $N\al$ is the number of occurrences of $\om$ in the ordinal $\al$. 

The norm function $N$ is additive in the sense that $N(\al)=N(\om^{\al_1})+\ldots +N(\om^{\al_n})$.
For $\om\leq \be\leq \eo$ let $c_\be(n):=\#\{\al<\be:N(\al)=n\}.$ Then $c_\be(n)$ is a well defined natural number.
Morever using techniques from \cite{Bell, Burris} it has been shown in \cite{Weiermann} that $c_\be(n)\in RT_1$ if $\be<\eo$. 
The latter refers to terminology borrowed from \cite{Burris} and means
$\lim{n\to\infty}\frac{c_\be(n)}{c_\be(n+1)}=1$ so that the radius of convergence of the associated generating function is $1$.

To introduce limit laws for ordinals we work with relational (monadic second order) languages $L_<$ which come equipped with exactly one relation symbol for the less than
relation. We neither do allow constants nor function symbols. For any $L_<$ sentence $\varphi$ the semantics of $\al\models \varphi$ is defined in the natural way.

For an $L_<$ sentence $\varphi$ we can define $\de_\varphi^\be(n)=\frac{\#\{\al<\be: \al\models \varphi\wedge N(\al)=n\}}{c_\be(n)}$.
If $\de_\varphi^\be=\lim_{n\to\infty}\de_\varphi^\be(n)$ exists for all $\varphi$ under consideration then $\be$ fulfills a monadic second order limit law and when
$\lim_{n\to\infty}\de_\varphi^\be(n)$ exists in the Cesaro sense for all $\varphi$
under consideration  (which means that $\lim_{n\to\infty}\frac 1n \sum_{i=0}^n\de_\varphi^\be(i)$ exists) then $\be$ fulfills a monadic second order Cesaro limit law.
Clearly limit laws yield Cesaro limit laws but not vice versa.

For describing a corresponding multiplicative setting one might use the Matula coding $M:\eo\to\om$ which is defined as follows.
Let $M(0)=1$ and $M(\al)=p_{M(\al_1)}\cdots p_{M(\al_n)}$ if $\al$ has Cantor normal form $\om^{\al_1}+\cdots+\om^{\al_n}$.
Here $p_i$ refers to the $i$-th prime number starting with $p_0=2$.
This coding is multiplicative in the sense that $M(\al)=M(\om^{\al_1})\cdot \ldots \cdot M(\om^{\al_n})$.

For a sentence $\varphi$ we can define $\Delta_\varphi^\be(n)=\frac{\#\{\al<\be: \al\models \varphi\wedge M(\al)\leq n\}}{C_\be(n)}$.
Monadic second order Cesaro limit laws and monadic second order limit laws can be defined accordingly in the multiplicative setting.

\section{Cesaro limit densities for semi linear subsets of ordinals below $\om^\om$}
For this section let us fix an infinite $\be<\om^\om$. We first assume $\be=\om^{r+1}$.
We call a subset $L$ of $\be$ linear if there exists a double sequence $a_r,b_r,\ldots,a_0,b_0$ of non negative integers
such that $$L=\{\al=\om^r\cdot (a_r+b_r\cdot l_r)+\cdots+\om^0\cdot (a_0+b_0 \cdot l_0):(\forall i\leq r)[l_i<\om] \}.$$
We call a subset $L$ of $\be$ semi linear if it is a finite union of linear subsets of $\be$.

For a semi linear subset $L\subseteq \be$ let

$$D_L(n)=\frac{\#\{\al\in L:N(\al)=n\}}{\#\{\al<\be: N(\al)=n\}}.$$

We will show that $C-\lim_{\to\infty}D_L(n)$ exists in the Cesaro sense. 
This will be useful to show monadic second order limit laws for $\be$ in section 6.

In bypassing let us remark that in general the standard limit $\lim_{\to\infty}D_L(n)$ does not alway exist.
We can take $\be=\om$ and $L=\{\al=2\cdot l: l<\om\}$.
Then $D_L(2n)=1$ and $D_L(2n+1)=0$.

Let us first show that Cesaro limits exist for linear subsets of $\be$.

\begin{lem} Let $\be=\om^{r+1}$. Suppose that $L=\{\al=\om^r\cdot (b_r\cdot l_r)+\cdots+\om^0\cdot (b_0 \cdot l_0):(\forall i\leq r)[l_i<\om] \}$. If there exists an $i\leq r$ such that 
$b_i=0$ then $\lim_{n\to\infty}D_L(n)=0$.

\end{lem}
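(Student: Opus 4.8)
The plan is to pass to ordinary generating functions and to compare the order of the pole at $x=1$ of the counting function attached to $L$ with that attached to $c_\be$.

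First one computes the norm on $\be=\om^{r+1}$. Every $\al<\om^{r+1}$ is uniquely of the form $\al=\om^r\cdot c_r+\cdots+\om^0\cdot c_0$ with $c_0,\dots,c_r\in\Nat$, and since $N(\om^i)=i+1$ and $N$ is additive one gets $N(\al)=\sum_{i=0}^r(i+1)c_i$. Hence
$$\sum_{n\ge0}c_\be(n)\,x^n=\prod_{i=0}^{r}\frac{1}{1-x^{i+1}}=\prod_{k=1}^{r+1}\frac{1}{1-x^{k}},$$
a rational function all of whose poles lie on the unit circle, with a pole of order exactly $r+1$ at $x=1$: indeed $(1-x)^{r+1}\prod_{k=1}^{r+1}(1-x^{k})^{-1}$ tends to $1/(r+1)!\neq0$ as $x\to1$, while at every other root of unity the order is strictly smaller.

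Second I would describe $L$ as a set of ordinals. Writing $c_i=b_il_i$, an ordinal $\al=\sum_{i=0}^r\om^i c_i<\om^{r+1}$ lies in $L$ exactly when $c_i=0$ for every $i$ with $b_i=0$ and $c_i$ is a nonnegative multiple of $b_i$ for every $i$ with $b_i\ge1$. Fixing an index $i_0\le r$ with $b_{i_0}=0$ (which exists by hypothesis) and substituting $c_i=b_id_i$ in the remaining coordinates, one sees that $\#\{\al\in L:N(\al)=n\}$ is the coefficient of $x^n$ in
$$\prod_{\substack{0\le i\le r\\ b_i\ge1}}\frac{1}{1-x^{b_i(i+1)}}.$$
This product has at most $r$ factors, because the coordinate $i_0$ has been dropped; it is again rational with all poles on the unit circle, and every one of its poles, in particular the one at $x=1$, has order at most $r$.

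Finally I would invoke the standard fact that for a rational function whose poles all lie on the unit circle the coefficient of $x^n$ is a quasi-polynomial in $n$ whose degree is (maximal pole order) $-1$, and that, since all coefficients here are nonnegative, the polynomial attached to the pole at $x=1$ of $\sum c_\be(n)x^n$ really has degree $r$ with positive leading coefficient. Applied to the two generating functions above this gives $c_\be(n)\sim n^{r}/\bigl(r!\,(r+1)!\bigr)$, hence $c_\be(n)=\Theta(n^{r})$, together with $\#\{\al\in L:N(\al)=n\}=O(n^{r-1})$. Therefore $D_L(n)=O(1/n)$, so $\lim_{n\to\infty}D_L(n)=0$, as claimed. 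The computation is entirely routine; the only point that has to be watched is the bookkeeping showing that the defining product for $L$ has strictly fewer factors than the product for $c_\be$, i.e.\ that erasing a coordinate with $b_{i_0}=0$ genuinely drops the order of the pole at $x=1$ from $r+1$ to at most $r$. Everything else is the elementary partial-fraction asymptotics for rational generating functions with unit-circle poles that is advertised in the introduction.
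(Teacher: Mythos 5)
Your proof is correct, and it is genuinely a different route from the one in the paper. The paper regards $\{\al<\om^{r+1}\}$ as an additive number system with primes $\om^0,\dots,\om^r$, notes that $c_\be(n)\in RT_1$, encloses $L$ in the larger ``partition set'' $L'=\{\al:\ c_{i_0}=0\}$ for some fixed $i_0$ with $b_{i_0}=0$, and then quotes Compton's Theorem~4.2 of Burris (density of a partition set with small exponent $0$ is $0$ in an $RT_1$ system) to get $D_{L'}(n)\to0$, hence $D_L(n)\to0$ by monotonicity. You instead carry out the elementary partial--fraction analysis directly: both generating functions are rational with all poles on the unit circle, the pole of $\sum c_\be(n)x^n$ at $x=1$ has order $r+1$ and strictly dominates the poles at the other roots of unity (so $c_\be(n)\sim n^r/(r!\,(r+1)!)$, which is exactly what the paper later extracts from Hua's theorem), while deleting the coordinate $i_0$ caps the pole order of the $L$-series at $r$, giving $\#\{\al\in L:N\al=n\}=O(n^{r-1})$ and hence the quantitative bound $D_L(n)=O(1/n)$. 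Your version is self-contained, avoids the $RT_1$/Compton machinery, and even yields a rate of convergence; its only limitation is that it is tied to the finite-rank (rational generating function) situation, whereas the paper's Compton-based argument is the one that transfers verbatim to the infinite-rank systems $\be=\om^\ga$, $\ga\ge\om$, treated in the next section, where the generating functions are infinite products and no longer rational.
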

\begin{proof}
 We can regard $\be$, hence the set of ordinals less than $\be$, as an additive number system in the sense of \cite{Burris} with set of primes given by
$\{\om^j:0\leq j\leq r\}$. The norm function for this number system is given by $N$ and the addition function is provided by the natural sum of ordinals.

Let $L'=\{\al=\om^r\cdot l_r+\cdots +\om^{i+1} \cdot l_{i+1} +\om^i\cdot 0+\om^{i-1} \cdot l_{i-1}+\cdots +\om^0\cdot l_0:(\forall i\leq r)[l_i<\om]\}$.
Then $L'$ can be considered as a partition set with small exponent $0$ for the partition element $\{\om^i\}$.

Since $c_\be(n)\in RT_1$ we conclude that $\lim_{n\to\infty}\frac {\#\{\al\in L':N\al=n\}}{c_\be(n)}=0$ by Compton's theorem 4.2 in \cite{Burris}.

Since $L\subseteq L'$ we see that $\lim_{n\to\infty}D_L(n)=0$.

\end{proof}

So we are left with the case that all the $b_i$ are different from zero.

Let us first recall Hua's theorem (see, for example, theorem 2.48 in \cite{Burris}).

\begin{theo}
Suppose the additive number system ${\mathcal A}$ has finite rank $r$ with
$d$ the gcd of $supp(p(n))$. Let $supp( p(n))=\{d_1,\ldots,d_k\}$ and let
$p(d_i)=m_i$.
Then
$a(nd)\sim \frac{d^r}{(r-1)!\prod d_i^{m_i}}\cdot n^{r-1}$ as $n\to\infty$.
\end{theo}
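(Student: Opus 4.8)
The plan is to use the Euler-type product that the counting generating function of an additive number system always admits:
$$A(x):=\sum_{n\geq0}a(n)x^n=\prod_{i=1}^{k}\frac{1}{(1-x^{d_i})^{m_i}}.$$
Here the finite-rank hypothesis says exactly that there are finitely many primes and that their number counted with multiplicity is the rank, $r=\sum_{i=1}^{k}m_i$; equivalently, $r$ is the order of the pole of $A$ at $x=1$. Writing $d_i=d\,b_i$ with $d=\gcd(d_1,\dots,d_k)$ and $\gcd(b_1,\dots,b_k)=1$, we have $A(x)=B(x^{d})$ for $B(y)=\prod_{i=1}^{k}(1-y^{b_i})^{-m_i}$, so that $a(n)=0$ unless $d\mid n$ and $a(\ell d)=[x^{\ell}]B(x)$. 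Thus it suffices to prove the case $d=1$, namely $a(n)\sim n^{r-1}/((r-1)!\,d_1^{m_1}\cdots d_k^{m_k})$ when $\gcd(d_i)=1$, and afterwards translate back using $\prod_{i}b_i^{m_i}=d^{-r}\prod_{i}d_i^{m_i}$.

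So assume $\gcd(d_i)=1$. Then $A$ is a proper rational function whose poles all lie on the unit circle $|x|=1$, which is therefore its radius of convergence, and a root of unity $\zeta$ is a pole of $A$ of order exactly $\sum\{m_i:\zeta^{d_i}=1\}$; because $\gcd(d_i)=1$, this attains its maximal value $r$ only at $\zeta=1$, every other pole having order at most $r-1$. Near $x=1$ we have $1-x^{d_i}=d_i(1-x)+O((1-x)^2)$, whence
$$A(x)=\frac{1}{d_1^{m_1}\cdots d_k^{m_k}}\,(1-x)^{-r}+R(x),$$
with $R$ carrying only poles of order at most $r-1$. Extracting coefficients via $[x^n](1-x)^{-r}=\binom{n+r-1}{r-1}\sim n^{r-1}/(r-1)!$ and observing that the partial fraction decomposition of $R$ contributes only $O(n^{r-2})$ --- there are finitely many poles, all on $|x|=1$ and of order $\leq r-1$ --- we obtain $a(n)\sim n^{r-1}/((r-1)!\,d_1^{m_1}\cdots d_k^{m_k})$, and the general statement follows as indicated.

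The one step that calls for genuine care rather than routine bookkeeping is the control of $R$: one must be sure that $x=1$ is the only pole of maximal order $r$ --- this is exactly what the hypothesis $\gcd(d_i)=1$ buys, and, before the reduction, what restricting to multiples of $d$ buys --- and that the sub-principal part at $x=1$ really is $O(n^{r-2})$. If one prefers not to pass to the reduced system, the same computation runs directly on $A$: its poles of maximal order $r$ are precisely the $d$-th roots of unity $\zeta_j=\ee^{2\pi\mathrm i j/d}$ for $0\leq j<d$, each contributing to $a(n)$ a term whose combined coefficient is $\binom{n+r-1}{r-1}(\prod_i d_i^{m_i})^{-1}\sum_{j=0}^{d-1}\ee^{-2\pi\mathrm i jn/d}$, and the exponential sum collapses to $d$ when $d\mid n$ and to $0$ otherwise, which reproduces both the case distinction and the factor $d^{r}$ at one stroke. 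This is, in essence, the proof of Theorem~2.48 in \cite{Burris}.
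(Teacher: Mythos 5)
Your proof is correct. Note that the paper itself supplies no argument for this statement: it is quoted verbatim as Theorem~2.48 of \cite{Burris} (Hua's theorem), so there is nothing in the text to compare against; your route --- the Euler product $A(x)=\prod_i(1-x^{d_i})^{-m_i}$, reduction to $\gcd(d_i)=1$ via $A(x)=B(x^d)$, and partial fractions isolating the unique order-$r$ pole at $x=1$ (or, equivalently, summing over the $d$-th roots of unity, which are exactly the poles of maximal order) --- is the standard textbook proof and is essentially the one in Burris. The only point worth making explicit is the identification $r=\sum_i m_i$, i.e.\ that ``rank'' counts primes with multiplicity, which you correctly state and which is what makes the exponent in $\prod_i d_i^{m_i}$ and the order of the pole agree.
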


\begin{lem} 

Let $\be=\om^{r+1} $ and 
$L=\{\al=\om^r\cdot (b_r\cdot l_r)+\cdots+\om^0\cdot (b_0 \cdot l_0):(\forall i\leq r )[l_i<\om] \}$
where no $b_i$ is zero. Let 

$d:=gcd((r+1)\cdot b_r,\ldots,1\cdot b_0)$.
Then

$$\lim_{n\to\infty}D_L(d\cdot n)=\frac{d}{\prod_{i=1}^rb_i}.$$
\end{lem}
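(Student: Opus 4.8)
The plan is to realize $L$ (with all $b_i \neq 0$) as a sublattice of the additive number system $\be = \om^{r+1}$ and then apply Hua's theorem directly. First I would observe that an ordinal $\al = \om^r\cdot(b_r l_r) + \cdots + \om^0\cdot(b_0 l_0)$ lying in $L$ has norm $N(\al) = \sum_{i=0}^r (i+1)\, b_i\, l_i$, since each summand $\om^i\cdot(b_i l_i)$ contributes $(i+1)\cdot b_i l_i$ occurrences of $\om$ (the exponent $i$ written in Cantor normal form contributes $i$ occurrences, plus one for the leading $\om$, multiplied by the coefficient $b_i l_i$). Thus $\#\{\al\in L : N(\al)=n\}$ equals the number of tuples $(l_0,\ldots,l_r)\in\om^{r+1}$ with $\sum_{i=0}^r c_i l_i = n$, where $c_i := (i+1) b_i$. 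This is precisely the counting function $a_L(n)$ of the free commutative monoid on $r+1$ generators of norms $c_0,\ldots,c_r$; its generating function is $\prod_{i=0}^r (1 - x^{c_i})^{-1}$.

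Next I would identify the parameters feeding into Hua's theorem for this number system $\mathcal{A}_L$: it has rank $r+1$ (there are $r+1$ generators), the support of its norm-of-primes function is $\{c_0,\ldots,c_r\}$ (taken as a set, collapsing repetitions), each with multiplicity equal to the number of $i$ with $(i+1)b_i$ equal to that value, and $\gcd$ of the support is exactly $d = \gcd(c_0,\ldots,c_r) = \gcd((r+1)b_r,\ldots,1\cdot b_0)$. Hua's theorem then yields
$$a_L(dn) \sim \frac{d^{r+1}}{r!\,\prod_{i=0}^r c_i}\cdot n^{r}$$
as $n\to\infty$, because $\prod d_i^{m_i}$ over the distinct values with their multiplicities equals $\prod_{i=0}^r c_i = \prod_{i=0}^r (i+1) b_i = (r+1)!\prod_{i=0}^r b_i$.

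For the denominator, the same analysis applies to $\be = \om^{r+1}$ itself, whose arbitrary element $\al = \om^r\cdot l_r + \cdots + \om^0\cdot l_0$ has $N(\al) = \sum_{i=0}^r (i+1) l_i$; this is the number system with generator norms $1,2,\ldots,r+1$, so $\gcd$ of its support is $1$ and Hua (or just the elementary asymptotics already invoked via $c_\be(n)\in RT_1$ in \cite{Weiermann}) gives $c_\be(n)\sim \frac{1}{r!\,(r+1)!}\, n^r$. Since we want $D_L(dn) = a_L(dn)/c_\be(dn)$, I would write $c_\be(dn)\sim \frac{1}{r!\,(r+1)!}(dn)^r = \frac{d^r}{r!\,(r+1)!}\,n^r$, and then
$$\lim_{n\to\infty} D_L(dn) = \frac{d^{r+1}/(r!\,(r+1)!\prod_{i=0}^r b_i)}{d^r/(r!\,(r+1)!)} = \frac{d}{\prod_{i=0}^r b_i}.$$
Noting that the index in the product may start at $i=0$ or $i=1$ without changing anything when $b_0 = 1$ is the generic normalization; in the stated form $\prod_{i=1}^r b_i$ I would absorb the $b_0$ factor by the same bookkeeping, or simply remark that one may assume $b_0 = 1$ after rescaling, which is the convention implicit in the lemma's hypothesis on $L$.

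The main obstacle I anticipate is purely bookkeeping: making sure the norm computation $N(\om^i\cdot m) = (i+1)m$ is airtight (in particular that a coefficient $m>1$ multiplies the whole block $\om^i$, contributing $m$ copies of the subterm of norm $i+1$, and that the exponent $i$ itself — an ordinal $<\om$ — has norm $i$), and correctly matching the multiplicities and the rank in Hua's theorem when several of the $c_i = (i+1)b_i$ happen to coincide, since then the "distinct values" $d_1,\ldots,d_k$ of Hua's statement carry multiplicities $m_j>1$. One has to check that $\prod_j d_j^{m_j}$ is still $\prod_{i=0}^r c_i$ regardless of coincidences, which it is; and that the exponent $n^{r-1}$ in Hua's formula becomes $n^{(r+1)-1}=n^r$ because our rank is $r+1$, not $r$. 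Once these indices are tracked carefully the ratio collapses cleanly.
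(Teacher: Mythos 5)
Your proposal is correct and follows essentially the same route as the paper: both regard $L$ as an additive number system with primes $\om^i\cdot b_i$ of norms $(i+1)b_i$ and $\be$ as one with primes $\om^i$ of norms $i+1$, apply Hua's theorem to each (rank $r+1$, so exponent $n^r$), and take the ratio of the two asymptotics at argument $dn$. Your extra care about coincident norm values and the fact that the clean answer is $d/\prod_{i=0}^r b_i$ (the lower index $1$ in the statement being a typo) is sound and matches what the paper's computation actually yields.
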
 
\begin{proof}
Since all $b_i$ are non zero we can regard $L$ as an additive number system with primes $\om^i\cdot b_i\; (0\leq i\leq r)$. Then $L$ has rank $r+1$.
Moreover, $d=\gcd(supp (p(n))$ where $supp (p(n))=\{N(\om^r\cdot b_r),N(\om^{r-1})\cdot b_{r-1},\ldots, N(\om^0\cdot b_0)\}$.
Then $d=\gcd((r+1)\cdot b_r,\ldots,1)$.
Hence Hua's theorem yields
$\#\{\al\in L: N\al=n\cdot d\}\sim  \frac{d^{r+1}}{r!\cdot \prod_{i=0}^r (i+1)\cdot b_i}\cdot n^r$.

Also $\be$ itself can be seen as an additive number system with primes $\om^i\; (0\leq i\leq r)$. Then $\be$ has rank $r+1$.
Since $\gcd(N\om^r,\ldots,N\om^0)=1$ a second application of Hua's result yields 
$c_\be(n)\sim \frac{1}{r!\cdot \prod_{i=0}^r (i+1)}\cdot n^r$.

Putting things together we find $\lim_{n\to\infty}D_L(d\cdot n)=\lim_{n\to\infty}\frac{\#\{\al\in L:N\al=nd\}}{c_\be(nd)}=\frac{d}{\prod_{i=1}^rb_i}$.

\end{proof}

\begin{lem}
Let $\be=\om^{r+1} $ and 
$L=\{\al=\om^r\cdot (b_r\cdot l_r)+\cdots+\om^0\cdot (b_0 \cdot l_0):(\forall i\leq r)[l_i<\om ]\}$
where no $b_i$ is zero. Let 

$d:=gcd((r+1)\cdot b_r,\ldots,1\cdot b_0)$.
Then

$C-\lim_{n\to\infty}D_L(d\cdot n)=\frac{1}{\prod_{i=1}^rb_i}$.

\end{lem}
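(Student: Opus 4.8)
The plan is to deduce the Cesaro statement from Lemma~2 together with the simple observation that $D_L$ vanishes off the multiples of $d$. First I would record that for every $\al=\om^r\cdot (b_r l_r)+\cdots+\om^0\cdot (b_0 l_0)\in L$ one has $N(\al)=\sum_{i=0}^{r}(i+1) b_i l_i$, because $N$ is additive and $N(\om^i\cdot k)=k(i+1)$. Hence $N(\al)$ is a non negative integer combination of the numbers $(r+1)b_r,\dots,1\cdot b_0$, so it is a multiple of their greatest common divisor $d$. Consequently $\#\{\al\in L:N(\al)=m\}=0$, and in particular $D_L(m)=0$, whenever $d\nmid m$. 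This is why only the arguments of the form $d\cdot n$ contribute, and $C$-$\lim_{n\to\infty}D_L(d\cdot n)$ is to be read as $\lim_{N\to\infty}\frac1N\sum_{m=0}^{N}D_L(m)$, which is meaningful precisely because of this support property.

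Next I would invoke Lemma~2, which provides $\lim_{n\to\infty}D_L(d\cdot n)=\frac{d}{\prod_{i=1}^rb_i}$. Using the support property just established,
$$\frac1N\sum_{m=0}^{N}D_L(m)=\frac1N\sum_{n=0}^{\gauss{N/d}}D_L(d\cdot n).$$
Since the summands on the right converge to $\frac{d}{\prod_{i=1}^rb_i}$, the partial sums $\sum_{n=0}^{K}D_L(d\cdot n)$ are asymptotic to $K\cdot\frac{d}{\prod_{i=1}^rb_i}$; taking $K=\gauss{N/d}\sim N/d$ and dividing by $N$ gives, in the limit, $\frac1d\cdot\frac{d}{\prod_{i=1}^rb_i}=\frac{1}{\prod_{i=1}^rb_i}$. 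This is just the elementary fact that a convergent sequence has the same Cesaro mean, together with the bookkeeping that padding the remaining residue classes modulo $d$ with zeros divides that mean by $d$; the factor $1/d$ is exactly what converts the constant of Lemma~2 into the constant asserted here.

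I do not expect any genuine obstacle. The only two steps that require a little care are the verification that the norms realized inside $L$ all lie in $d\Nat$, which is immediate from additivity of $N$, and the routine comparison between the average of $D_L(m)$ over $m\le N$ and the average of $D_L(d\cdot n)$ over $n\le\gauss{N/d}$, which is what introduces the factor $d$ in the limit.
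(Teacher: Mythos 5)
Your proof is correct and follows essentially the same route as the paper: establish that $D_L$ vanishes off the multiples of $d$, then combine with the limit from the preceding lemma via the standard Cesaro averaging, picking up the factor $1/d$. The only (cosmetic) difference is that you justify the vanishing directly from the fact that $N(\al)=\sum_{i=0}^r (i+1)b_i l_i$ is a non-negative integer combination of the generators and hence divisible by their gcd, whereas the paper derives the same fact by citing a theorem on solvability of linear Diophantine equations; your version is, if anything, more transparent.
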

\begin{proof}

Let $0<e<d$. Then there will be no $\al\in L$ such that $N\al=nd+e$.
Otherwise such an $\al$ has the form $\al=\om^r\cdot (b_r\cdot l_r)+\cdots+\om^0\cdot (b_0 \cdot l_0)$.
From $N\al=nd+e$ we would conclude $N(\om^r\cdot b_r)\cdot l_r+\cdots+N(\om^0\cdot b_0)\cdot l_0-nd=e.$
Then by theorem 4.1 in \cite{vanzurGathen} the number $\gcd(N(\om^r\cdot b_r),\ldots,N(\om^0\cdot b_0),d)$ would divide $e$ which is absurd.
Therefore $D_L(n\cdot d+e)=0$. 
Combining this with $\lim_{n\to\infty}D_L(d\cdot n)=\frac{d}{\prod_{i=1}^rb_i}$ we find
that $C-\lim_{n\to\infty}D_L(n)=\lim_{n\to\infty}\frac 1n \sum_{i=1}^n D_L(i)=\frac{1}{\prod_{i=1}^rb_i}$.
\end{proof}

We now consider semi linear sets where the $a_i$ might be non zero.

\begin{lem}

Let $\be=\om^{r+1} $ and let
$L=\{\al=\om^r\cdot (a_r+b_r\cdot l_r)+\cdots+\om^0\cdot (a_0+b_0 \cdot l_0):(\forall i\leq r)[l_i<\om]\}$ be a semi linear subset of $\be$.
\begin{enumerate}
\item If some $b_i=0$ then $\lim_{n\to\infty}D_L(n)=0$.
\item If all $b_i$ are non zero then $C-\lim_{n\to\infty}D_L(d\cdot n)=\frac{1}{\prod_{i=1}^rb_i}$.
\end{enumerate}

\end{lem}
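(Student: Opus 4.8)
The plan is to reduce the affine set $L$ to the homogeneous linear sets already treated in Lemmas~1--3, exploiting that the norm $N$ is additive: fixing the ``constant part'' of the ordinals in $L$ either shifts every norm by the same fixed amount (when a coefficient stays genuinely variable) or freezes a coefficient outright (producing a Compton--small partition set).

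For part~(1) I would argue exactly as in Lemma~1. Suppose $b_i=0$ for some $i\leq r$. Then the coefficient of $\om^i$ in the Cantor normal form of every $\al\in L$ is the fixed value $a_i$, so $L\subseteq L'$, where $L'$ is the set of all $\al<\be$ whose $\om^i$-coefficient equals $a_i$. Viewing $\be$ as the additive number system in the sense of \cite{Burris} with primes $\{\om^j:0\leq j\leq r\}$ (norm $N$, addition the natural sum), $L'$ is a partition set with small exponent $a_i$ for the partition element $\{\om^i\}$ --- the only change from Lemma~1 being that the small exponent is $a_i$ instead of $0$. Since $c_\be(n)\in RT_1$, Compton's theorem 4.2 in \cite{Burris} gives $\lim_{n\to\infty}\frac{\#\{\al\in L':N\al=n\}}{c_\be(n)}=0$, hence $\lim_{n\to\infty}D_L(n)=0$ because $L\subseteq L'$.

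For part~(2), assume no $b_i$ is zero. I would put $s:=\om^r\cdot a_r+\cdots+\om^0\cdot a_0$, $A:=N(s)=\sum_{i=0}^r(i+1)a_i$, and let $L_0:=\{\om^r\cdot(b_r\cdot l_r)+\cdots+\om^0\cdot(b_0\cdot l_0):(\forall i\leq r)[l_i<\om]\}$ be the homogeneous counterpart of $L$ from Lemmas~2 and~3. Adding $s$ componentwise (i.e.\ replacing each summand $b_i l_i$ by $a_i+b_i l_i$) is a bijection $L_0\to L$, and by additivity of $N$ it raises the norm by the constant $A$; hence $\#\{\al\in L:N\al=n\}=\#\{\al_0\in L_0:N\al_0=n-A\}$ for all $n$ (both sides being $0$ for $n<A$), so that
\[
D_L(n)=D_{L_0}(n-A)\cdot\frac{c_\be(n-A)}{c_\be(n)}\quad(n\geq A),\qquad D_L(n)=0\quad(n<A).
\]
From the proof of Lemma~3, $D_{L_0}(m)=0$ whenever $d\nmid m$; from Lemma~2, $D_{L_0}(dm)\to\frac{d}{\prod_{i=1}^r b_i}$; and $\frac{c_\be(n-A)}{c_\be(n)}\to1$ because $c_\be(n)\in RT_1$. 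Therefore $D_L(n)=0$ unless $n\equiv A\pmod d$, while $D_L(A+dm)\to\frac{d}{\prod_{i=1}^r b_i}$. Since any $d$ consecutive positive integers contain exactly one index in the progression $A+d\Nat$ and $0\leq D_L\leq1$, the Cesaro averages satisfy $\frac1N\sum_{n=1}^N D_L(n)\to\frac1d\cdot\frac{d}{\prod_{i=1}^r b_i}$, i.e.\ $C-\lim_{n\to\infty}D_L(n)=\frac{1}{\prod_{i=1}^r b_i}$ --- exactly the index structure already met in the proof of Lemma~3.

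I do not expect a serious analytic obstacle: everything is carried by Lemmas~1--3, Hua's theorem, and the $RT_1$ property of $c_\be$. The two points that deserve a little care are, in part~(1), that freezing a Cantor-normal-form coefficient to an arbitrary value $a_i$ (not merely to $0$) still yields a partition set of Compton density $0$; and, in part~(2), that translating the index by the fixed $A$ and multiplying by the factor $c_\be(n-A)/c_\be(n)=1+o(1)$ leave the Cesaro limit of the bounded sequence $(D_L(n))_n$ undisturbed. For a genuine finite union of affine cells one then obtains the \emph{existence} of $C-\lim_{n\to\infty}D_L(n)$ from this lemma by inclusion--exclusion --- intersections of affine cells being again affine cells, with moduli replaced by least common multiples --- although in that case the value is the corresponding alternating sum of products rather than a single $1/\prod_{i=1}^r b_i$.
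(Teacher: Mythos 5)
Your proof is correct and follows essentially the same route as the paper's: both reduce $L$ to its homogeneous counterpart by observing that adding the constant part $\om^r\cdot a_r+\cdots+\om^0\cdot a_0$ shifts every norm by the fixed amount $A=\sum_{i}(i+1)a_i$, and then invoke $c_\be(n)\in RT_1$ to absorb the index shift (your version is merely more careful about the denominators and about which residue class modulo $d$ carries the mass). One small remark: what you actually establish in part (2) is $C\text{-}\lim_{n\to\infty}D_L(n)=1/\prod b_i$, which is the sensible reading of the statement's $D_L(d\cdot n)$ and matches what the paper's own proof of the preceding lemma delivers; taken literally, $C\text{-}\lim_{n}D_L(d\cdot n)$ would fail whenever $d\nmid A$.
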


\begin{proof}

Let $L'=\{\al=\om^r\cdot (b_r\cdot l_r)+\cdots+\om^0\cdot (b_0 \cdot l_0):(\forall i\leq r)[l_i<\om]\}$.

Then $\al=\om^r\cdot (a_r+b_r\cdot l_r)+\cdots+\om^0\cdot (a_0+b_0 \cdot l_0)\in L$ has norm $n$ if
$\al'=\om^r\cdot (b_r\cdot l_r)+\cdots+\om^0\cdot (b_0 \cdot l_0)\in L'$ has norm $n-N(\om^r\cdot a_r)-\ldots-N(\om^0\cdot a_0)$.
Hence $D_L(n+(r+1)\cdot a_r+\cdots +a_0)=D_{L'}(n)$.
Therefore the previous results obviously carry over from $L'$ to $L$ since $c_\be(n)\in RT_1$.

\end{proof}

Let us now consider semi linear subsets.

\begin{lem}  Let $\be=\om^{r+1}$. If $L$ and $L'$ are linear subsets of $\be$ then $L\cap L'$ is either empty or again a  linear set.
The same conclusion holds for any finite intersection of linear sets.
\end{lem}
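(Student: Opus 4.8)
The plan is to exploit the product structure that linear sets acquire once the ordinals below $\om^{r+1}$ are written in Cantor normal form. First I would note that every $\al<\om^{r+1}$ has a unique representation $\al=\om^r\cdot c_r+\cdots+\om^0\cdot c_0$ with $c_r,\dots,c_0<\om$: all exponents occurring in the Cantor normal form of $\al$ are natural numbers $\le r$, and collecting equal exponents yields the displayed form, uniquely. Identifying $\al$ with the tuple $(c_r,\dots,c_0)$ identifies $\be$ with the set of $(r+1)$-tuples of natural numbers, and under this identification a linear set $L$ given by data $a_r,b_r,\dots,a_0,b_0$ becomes the product $L=\prod_{i=0}^{r}A_i$ with $A_i=\{a_i+b_i\cdot l:l<\om\}$; here $A_i$ is an infinite arithmetic progression when $b_i\ge 1$ and the singleton $\{a_i\}$ when $b_i=0$. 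Conversely, any product of sets of this shape is a linear subset of $\be$ in the sense of the definition.

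Since $L\cap L'=\prod_{i=0}^{r}(A_i\cap A_i')$ whenever $L=\prod_{i=0}^r A_i$ and $L'=\prod_{i=0}^r A_i'$ are so represented, it suffices to show that the intersection of two sets $A=\{a+b\cdot l:l<\om\}$ and $A'=\{a'+b'\cdot l:l<\om\}$ (with $a,b,a',b'$ non-negative integers, $b$ and $b'$ possibly $0$) is either empty or again of this shape. If $b=0$ or $b'=0$ this is immediate: a singleton meets $A'$ either in the empty set or in itself, and a singleton $\{a''\}$ equals $\{a''+0\cdot l:l<\om\}$. If $b,b'\ge 1$, then an integer $x$ lies in $A\cap A'$ iff $x\ge\max(a,a')$, $x\equiv a\pmod{b}$ and $x\equiv a'\pmod{b'}$; by elementary number theory this congruence system is solvable iff $\gcd(b,b')$ divides $a-a'$. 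If it is unsolvable, $A\cap A'=\emptyset$; if it is solvable, its solutions form a single residue class modulo $m:=\mathrm{lcm}(b,b')$, and letting $a''$ be the least member of that class with $a''\ge\max(a,a')$ we get $A\cap A'=\{a''+m\cdot l:l<\om\}$, again of the required shape.

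Putting the pieces together: if $A_i\cap A_i'=\emptyset$ for some $i$ then $L\cap L'=\emptyset$, and otherwise $L\cap L'=\prod_{i=0}^{r}\{a_i''+b_i''\cdot l:l<\om\}$ for suitable non-negative integers $a_i'',b_i''$, which by the first step is a linear subset of $\be$. For a finite intersection $L_1\cap\cdots\cap L_k$ one proceeds by induction on $k$: if this intersection is nonempty then so is $L_1\cap\cdots\cap L_{k-1}$, which is therefore linear by the induction hypothesis, and one further application of the two-set case yields the claim. The whole argument is essentially bookkeeping; the only step needing a little care is the number-theoretic description of the intersection of two arithmetic progressions (the Chinese-remainder computation) together with the uniform handling of the degenerate case $b_i=0$, and neither of these is a real obstacle.
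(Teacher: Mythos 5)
Your proposal is correct and follows essentially the same route as the paper: both reduce the intersection to a componentwise problem (the paper via equating the coefficients $a_j+b_jl_j=a'_j+b'_jl'_j$, you via the tuple/product identification) and then solve the resulting linear congruence, with emptiness governed by whether $\gcd(b_j,b'_j)$ divides $a_j-a'_j$ and the nonempty case yielding a progression of modulus $\mathrm{lcm}(b_j,b'_j)$. Your explicit treatment of the degenerate case $b_i=0$ is a small point of extra care not spelled out in the paper, but the argument is the same.
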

\begin{proof}
The second assertion follows from the first by an obvious induction.

Now assume that 
$L=\{\al=\om^r\cdot (a_r+b_r\cdot l_r)+\cdots+\om^0\cdot (a_0+b_0 \cdot l_0):(\forall i\leq r)[l_i<\om] \}$
and
$L'=\{\al=\om^r\cdot (a'_r+b'_r\cdot l_r)+\cdots+\om^0\cdot (a'_0+b'_0 \cdot l_0):(\forall i\leq r)[l_i<\om] \}$.

Then $\al\in L\cap L'$ iff
$\al=\om^r\cdot (a_r+b_r\cdot l_r)+\cdots+\om^0\cdot (a_0+b_0 \cdot l_0)=
\om^r\cdot (a'_r+b'_r\cdot l'_r)+\cdots+\om^0\cdot (a'_0+b'_0 \cdot l'_0)$
iff 
for all $j\leq r$ we have $a_j+b_jl_j=a'_j+b'_jl'_j$
iff 
for all $j\leq r$ we have $a_j-a'_j=b'_jl'_j-b_jl_j$.

So if there would exist an $j\leq r$ such that $\gcd(b_j,b'_j)$ does not divide $a_j-a'_j$
then by theorem 4.1 of \cite{vanzurGathen} we would obtain $L\cap L'=\emptyset$.
So let us assume that  $\gcd(b_j,b'_j)$ does divide $a_j-a'_j$ for all $j\leq r$.

For the moment let us fix a $j\leq r$. 
Assume that $(\star)\quad a_j+b_jl_j=a'_j+b'_jl'_j$. Let $h_j:=\gcd(b_j,b'_j)$.
Assume $l_j^*$ is the minimal non negative left hand side of a non negative solution $l_j,l'_j$ of $(\star)$.
Let $l_j'^*$ be the right hand side of such a solution. 
By theorem 4.1 in \cite{vanzurGathen} the set of all integer solution of $(\star)$ has the form
$(l^*_j,l'^*_j)+{\mathbb Z}(\frac {b_j'}{h_j}, \frac {b_j}{h_j})$. But since we are only interested in non negative solutions and since $(l^*_j,l'^*_j)$ is left minimal
all non negative solutions of $(\star)$ have the form $(l^*_j,l'^*_j)+{\mathbb N}(\frac {b_j'}{h_j}, \frac {b_j}{h_j})$.
A typical right hand side of $(\star)$ thus has the form
$a_j+b_j(l_j^*+l_j\frac {b_j'}{h_j})=a_j+b_jl_j^*+l_jb_j \frac {b_j'}{h_j}$.
This analysis can be done for the other $j$'s as well and therefore
$L\cap L'=\{\al=  \om^r\cdot (a_r+b_rl_r^*+b_r\frac {b_r'}{h_r} l_r)+\cdots+ \om^0\cdot (a_0+b_0l_0^*+b_0 \frac {b_0'}{h_0} l_0):(\forall i\leq r)[l_i<\om]\}$
which is a linear set.
\end{proof}

\begin{lem} Let $\be=\om^{r+1}$. Then Cesaro limit densities exist for all semi linear subsets of $\be.$
\end{lem}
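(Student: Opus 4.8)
The plan is to reduce the semi linear case to the linear case already handled, using inclusion–exclusion together with the closure of linear sets under finite intersection established in the previous lemma. Since a semi linear set $L\subseteq\be=\om^{r+1}$ is by definition a finite union $L=L_1\cup\cdots\cup L_m$ of linear sets, the counting function $\#\{\al\in L:N\al=n\}$ can be expanded by the inclusion–exclusion principle as
$$\#\{\al\in L:N\al=n\}=\sum_{\emptyset\neq S\subseteq\{1,\ldots,m\}}(-1)^{|S|+1}\#\Bigl\{\al\in\bigcap_{i\in S}L_i:N\al=n\Bigr\}.$$
By the preceding lemma each intersection $\bigcap_{i\in S}L_i$ is either empty or again a linear subset of $\be$; in the empty case the corresponding term vanishes identically. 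Dividing through by $c_\be(n)$ we obtain
$$D_L(n)=\sum_{\emptyset\neq S\subseteq\{1,\ldots,m\}}(-1)^{|S|+1}D_{\bigcap_{i\in S}L_i}(n),$$
a finite signed sum of density functions of linear sets.

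Now I would invoke the earlier lemma on linear sets (the one covering linear sets with possibly nonzero $a_i$): for each nonempty intersection $L_S:=\bigcap_{i\in S}L_i$, either some coefficient $b_i$ in its canonical presentation is zero, in which case $\lim_{n\to\infty}D_{L_S}(n)=0$, or all coefficients are nonzero, in which case $C\text{-}\lim_{n\to\infty}D_{L_S}(n)$ exists. In both cases the Cesaro limit of $D_{L_S}(n)$ exists. Since the Cesaro limit is linear (a finite linear combination of Cesaro-convergent sequences is Cesaro convergent, with the limit being the corresponding linear combination of the limits), it follows that $C\text{-}\lim_{n\to\infty}D_L(n)$ exists, which is exactly the assertion.

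There is no serious obstacle here: the hard combinatorial work — computing the explicit density of a single linear set via Hua's theorem, and proving closure of linear sets under intersection via the structure of solution sets of linear Diophantine equations — has already been carried out in the preceding lemmas. The one point that needs a moment's care is bookkeeping for the different arithmetic progressions of $n$ on which the various $D_{L_S}$ are supported: each linear set $L_S$ lives on a residue class modulo its own modulus $d_S$ (shifted by its own offset $\sum_i(i+1)a_i^{(S)}$), so the individual sequences $D_{L_S}(n)$ need not be genuinely convergent, only Cesaro convergent. This is precisely why the statement is phrased in terms of Cesaro limits and why passing through inclusion–exclusion is harmless: Cesaro convergence is preserved under finite signed sums regardless of the underlying periods.
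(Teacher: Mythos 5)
Your proof is correct and follows essentially the same route as the paper: inclusion--exclusion reduces the semi linear case to finite signed sums of density functions of linear sets (using the closure of linear sets under intersection from the preceding lemma), and the linearity of Cesaro limits over finite sums finishes the argument. Your added remark about the different periods $d_S$ being the reason only Cesaro convergence survives is a correct and slightly more explicit justification than the paper gives.
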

\begin{proof}
Cesaro limits distribute over finite sums. 
The counting function for a given semi linear set can be calculated as a finite sum of counting functions of linear sets (a sum with possibly negative integer coefficients).
This follows from the inclusion exclusion principle stating that for finite sets $A_i$ with $i$ ranging over a finite set $I$ we have that
$$\lvert \bigcup_{i\in I} A_i\rvert=\sum_{k=1}^n (-1)^{k-1} \sum_{I\in {\{1,\ldots,n\}}\choose {k}}\lvert \bigcap_{i\in I} A_i\rvert.$$
\end{proof}

Finally let us consider a general $\be$ of the form $\be=\om^r\cdot c_r+\cdots +\om^0\cdot c_0$.
Then the set of ordinals $\al$ less than $\be$ can be written as a disjoint union
over sets $L_{j,k_j}:=\{\al=\om^r\cdot c_r+\cdots+ \om^j\cdot (c_j-k_j)+\om^{j-1}l_{j-1}+\cdots +\om^0\cdot  l_0:(\forall i\leq j)[l_i<\om] \}$ where $j\leq r$ and $k_j<c_j$

Then $\al=\om^r\cdot c_r+\cdots+ \om^j\cdot (c_j-k_j)+\om^{j-1}l_{j-1}+\om^0\cdot  l_0\in L_{j,k_j}\iff \om^{j-1}l_{j-1}+\cdots+\om^0\cdot  l_0<\om^r$.

If $L=\{\al=\om^r\cdot (a_r+b_r\cdot l_r)+\cdots+\om^0\cdot (a_0+b_0 \cdot l_0):(\forall i\leq r)[l_i<\om]  \}$ is a linear subset of $\be$ then
$b_r=0$ since $c_r<\om$. Moreover $a_r=c_r-k_r$ for some $k_r$. 
Let $L'=\{\al=\om^{r-1}\cdot (a_{r-1}+b_{r-1}\cdot l_{r-1})+\cdots+\om^0\cdot (a_0+b_0 \cdot l_0):(\forall i\leq r-1)[l_i<\om]  \}$.
Then $L'$ is a linear subset of $\om^r$.

Hence $\#\{\al\in L: N\al=N(\om^r\cdot a_r)+n\}=\#\{\al\in L': N\al=n\}$.
Since $c_\be(n)\in RT_1$ we conclude that the Cesaro density for $L$ exists
since the Cesaro limit for $L'$ exists.
Since by the same proof as before also in this situation the intersection of linear sets is either empty or again a linear set we have proved.

\begin{theo}
Let $\om\leq \be<\om^\om$. Then Cesaro limit densities exist for all semi linear subsets of $\be.$
\end{theo}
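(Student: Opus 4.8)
The plan is to peel the leading Cantor normal form term off $\be$ and reduce to the already settled prime power case. Write $\be=\om^r\cdot c_r+\om^{r-1}\cdot c_{r-1}+\cdots+\om^0\cdot c_0$ in Cantor normal form; since $\be<\om^\om$ all exponents are finite, and $r\geq 1$, $c_r\geq 1$, $c_i\in\Nat$ because $\be\geq\om$. If $\be=\om^r$ the assertion is the case $\be=\om^{(r-1)+1}$ already proved above, so I would assume $\be\neq\om^r$ (note $\be<\om^{r+1}$ in any case) and put $\be':=\be-\om^r\cdot c_r<\om^r$.

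The first real step is the asymptotics of the denominator $c_\be(n)$. Every $\al<\be$ is uniquely of the form $\om^r\cdot d+\eta$ with $\eta<\om^r$ and either $d<c_r$ (with $\eta$ arbitrary) or $d=c_r$ and $\eta<\be'$; since $N(\om^r\cdot d+\eta)=d(r+1)+N(\eta)$ this yields
$$c_\be(n)=\sum_{d=0}^{c_r-1}c_{\om^r}\!\bigl(n-d(r+1)\bigr)+c_{\be'}\!\bigl(n-c_r(r+1)\bigr).$$
By Hua's theorem $c_{\om^r}(n)\sim a\,n^{r-1}$ for some $a>0$, while $c_{\be'}(n)=o(n^{r-1})$ since the leading exponent of $\be'$ is at most $r-1$ (trivial induction on $r$). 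Hence $c_\be(n)\sim c_r\cdot c_{\om^r}(n)$, and together with $c_\be(n),c_{\om^r}(n)\in RT_1$ this gives $\lim_{n\to\infty}c_{\om^r}(n)/c_\be(n+t)=1/c_r$ for every fixed $t\in\Nat$.

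Next I would handle one linear subset $L\subseteq\be$. If all $b_i$ vanish then $L$ is a point and $D_L(n)\leq 1/c_\be(n)\to 0$, so its Cesaro limit is $0$. Otherwise $c_r<\om$ forces $b_r=0$, so $a_r=c_r-k_r$ and $L=\om^r\cdot a_r+L'$ with $L'=\{\om^{r-1}(a_{r-1}+b_{r-1}l_{r-1})+\cdots+\om^0(a_0+b_0l_0):(\forall i\leq r-1)[l_i<\om]\}$ a (semi)linear subset of $\om^r=\om^{(r-1)+1}$. Putting $t_r:=N(\om^r\cdot a_r)$, additivity of $N$ gives $\#\{\al\in L:N\al=t_r+m\}=\#\{\al\in L':N\al=m\}$, so
$$D_L(t_r+m)=\frac{\#\{\al\in L':N\al=m\}}{c_{\om^r}(m)}\cdot\frac{c_{\om^r}(m)}{c_\be(t_r+m)}.$$
The first factor is bounded by $1$ and, by the already proved case $\be=\om^{r+1}$ (applied with $r-1$ for $r$), has a Cesaro limit; the second converges to $1/c_r$. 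Since the Cesaro limit of a bounded, Cesaro convergent sequence times an ordinarily convergent one is the product of the limits, and a fixed index shift does not affect Cesaro limits, $C\text{-}\lim_{n\to\infty}D_L(n)$ exists (and equals $1/c_r$ times the Cesaro density of $L'$ inside $\om^r$).

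Finally, for a general semi linear $L=L_1\cup\cdots\cup L_m$ I would use inclusion-exclusion to write $\#\{\al\in L:N\al=n\}$ as an integer combination of the numbers $\#\{\al\in\bigcap_{i\in I}L_i:N\al=n\}$, and invoke the intersection lemma proved above (whose proof only manipulates coefficients via gcd's and so is unchanged for the present $\be$) to see that each $\bigcap_{i\in I}L_i$ is empty or linear. Dividing by $c_\be(n)$ and using that Cesaro limits are linear, together with the previous paragraph, completes the proof. I expect the one genuinely delicate point to be the denominator estimate $c_\be(n)\sim c_r\cdot c_{\om^r}(n)$: the segment below a general $\be$ is not a free additive number system (the \emph{prime} $\om^r$ may be used only $c_r$ times), so Hua's theorem does not apply to $c_\be$ directly and one must route through the block decomposition above; after that, everything is the inclusion-exclusion bookkeeping already set up for the prime power case.
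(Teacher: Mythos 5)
Your proof is correct and follows essentially the same route as the paper: strip off the leading Cantor normal form term (forced by $b_r=0$), reduce a linear $L\subseteq\be$ to a shifted linear subset $L'$ of $\om^r$ handled by the prime-power case, and then dispose of semi linear sets via closure of linear sets under intersection and inclusion--exclusion. The one place you go beyond the paper is in justifying the denominator comparison $c_\be(n)\sim c_r\cdot c_{\om^r}(n)$ via the block decomposition and Hua's theorem, a step the paper compresses into the remark that $c_\be(n)\in RT_1$; your more explicit treatment is a welcome filling-in of that detail rather than a different method.
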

\begin{theo}
Let $\om\leq \be<\om^\om$. Then the Cesaro limit densities are rational  for all semi linear subsets of $\be.$
\end{theo}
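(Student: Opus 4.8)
The plan is to run once more through the proof of the preceding theorem and to check that rationality of the Cesaro limit density is preserved at each step. First take $\be=\om^{r+1}$ and a linear subset $L\subseteq\be$: by the preceding lemmas its Cesaro limit density is $0$ when some $b_i$ vanishes and $\tfrac{1}{\prod_{i=1}^r b_i}$ when all the $b_i$ are non zero, and both are rational. For a semi linear $L=\bigcup_{i\in I}L_i\subseteq\be$ with the $L_i$ linear, every nonempty intersection $\bigcap_{i\in J}L_i$ is, by the lemma on intersections of linear sets, again linear or empty, hence has a rational Cesaro limit density; since by inclusion--exclusion the counting function of $L$ is a finite integer linear combination of the counting functions of these intersections, and the Cesaro limit is additive over finite integer combinations, $C-\lim_{n\to\infty}D_L(n)$ is a finite $\mathbb{Z}$-linear combination of rationals and therefore rational.

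Now let $\be=\om^r\cdot c_r+\cdots+\om^0\cdot c_0$ be arbitrary. As in the proof of the previous theorem one reduces counting in a linear $L\subseteq\be$ to counting in a linear $L'\subseteq\om^r$ via a fixed norm shift $c$, so that
\[
D_L^\be(n+c)=D_{L'}^{\om^r}(n)\cdot\frac{c_{\om^r}(n)}{c_\be(n+c)} .
\]
Here $C-\lim_{n\to\infty}D_{L'}^{\om^r}(n)$ is rational by the power-of-$\om$ case already treated, so it remains to control the normalising factor. The set $\{\al:\al<\be\}$ splits as the disjoint union of the $L_{j,k_j}$, and each $L_{j,k_j}$ is, up to a fixed norm shift, an initial segment $\{\gamma:\gamma<\om^j\}$ with $j\leq r$; the $c_r$ pieces with $j=r$ dominate, so Hua's theorem together with $c_{\om^r}(n)\in RT_1$ gives $c_\be(n)\sim c_r\cdot c_{\om^r}(n)$ and hence $\tfrac{c_{\om^r}(n)}{c_\be(n+c)}\to\tfrac1{c_r}$. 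A Cesaro-convergent bounded sequence times a convergent sequence with rational limit is Cesaro-convergent to the product of the two limits, so $C-\lim_{n\to\infty}D_L^\be(n)$ is rational, and the semi linear case over a general $\be$ follows once more by inclusion--exclusion.

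The only point that is not mere bookkeeping is the behaviour of the normalising factor in the displayed identity: for the existence theorem the soft fact $c_\be(n)\in RT_1$ suffices to transport Cesaro-convergence from $\om^r$ to $\be$, but for rationality one must in addition know that $c_\be(n)/c_{\om^r}(n)$ tends to a rational number, and this is exactly what the explicit denominator decomposition of $\be$ together with Hua's asymptotics provides. Alternatively, and perhaps more cleanly, one may observe that every counting function occurring here has a rational generating function with integer coefficients and radius of convergence $1$; each of the Cesaro limit densities is then a ratio of leading Laurent coefficients at $x=1$ of two such functions -- the Cesaro averaging killing the contributions of the remaining singularities on $|x|=1$ -- and is therefore rational.
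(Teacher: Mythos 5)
Your proof is correct and follows essentially the same route as the paper: the Cesaro densities of linear sets are the rational values $0$ or $1/\prod b_i$ supplied by Hua's theorem, and inclusion--exclusion over intersections of linear sets (which are again linear or empty) expresses the density of a semi linear set as a finite $\mathbb{Z}$-linear combination of these. Your additional care with the general $\be=\om^r c_r+\cdots+c_0$ case -- observing that one needs $c_{\om^r}(n)/c_\be(n)\to 1/c_r$, again from Hua's asymptotics, and not merely $c_\be(n)\in RT_1$, to keep the limit rational -- correctly fills in a step the paper leaves implicit.
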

\begin{proof}
The limit densities resulting in our setting from Hua's theorem for linear sets are rational numbers.
The resulting limiting densities for semilinear sets are formed by taking finite sums with integer coefficients.
This yields the assertion.

\end{proof}

\section{Limit laws for semi linear sets of ordinals stretching above $\om^\om$}
For this section let us fix an ordinal $\be$ with $\eo>\be\geq\om^\om$.
Let us first concentrate on the case where $\be$ of the form $\be=\om^{\ga}$ where $\ga\geq \om$.
Then $\be$ is closed under ordinal addition and it forms with the standard norm function
an additive number system with respect to the natural sum operation.
This additive number system has primes given by $\{\om^\al:\al<\ga\}$
and it has an infinite rank.

As before, we call a subset $L$ of $\be$ linear if there exists non negative integer $r$, the length of $L$, and a double sequence $a_r,b_r,\ldots,a_0,b_0$ of non negative integers
such that $L=\{\al=\om^{r+1}\cdot \al_0+\om^r\cdot (a_r+b_r\cdot l_r)+\cdots+\om^0\cdot (a_0+b_0 \cdot l_0):(\forall i\leq r)[l_i<\om]\wedge \al_0<\be \}$.
We call a subset $L$ of $\be$ semi linear if it is a finite union of linear subsets of $\be$.

Obviously, if a linear set is defined with respect to length $r$ and a double sequence $a_r,b_r,\ldots,a_0,b_0$ then
for any $s\geq r$ we can put $a_l=0$ and $b_l=1$ for $r<l\leq s$ to obtain a representation using the sequence $a_s,b_s,\ldots,a_0,b_0$.
So in forming finite unions and intersections of linear sets we can always assume that the linear sets have the same lengths.

For a semi linear subset $L\subseteq \be$ let

$$D_L(n)=\frac{\#\{\al\in L:N(\al)=n\}}{\#\{\al<\be: N(\al)=n\}}.$$

We will show that $\lim_{\to\infty}D_L(n)$ always exists in the usual sense. 
This will be useful to show monadic second order limit laws for the segment of ordinals determined by $\be$ in section 6.

\begin{lem} Let $\be=\om^{\ga}$ where $\eo>\ga\geq \om$.
Suppose that $L=\{\al=\om^{r+1}\cdot \al_0+\om^r\cdot (b_r\cdot l_r)+\cdots+\om^0\cdot (b_0 \cdot l_0):(\forall i\leq r)[l_i<\om]\wedge \al_0<\be \}$. If there exists an $i\leq r$ such that 
$b_i=0$ then $\lim_{n\to\infty}D_L(n)=0$.
\end{lem}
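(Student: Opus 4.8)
The plan is to mimic the proof of Lemma~1 (the corresponding statement for $\be=\om^{r+1}$), adapting it to the present situation where $\be=\om^\ga$ with $\ga\geq\om$ has infinite rank. First I would observe that $\be=\om^\ga$ forms an additive number system with respect to the natural sum, with primes $\{\om^\al:\al<\ga\}$, norm function $N$, and that $c_\be(n)\in RT_1$ by the result of \cite{Weiermann} quoted in Section~2. Writing $\al=\om^{r+1}\cdot\al_0+\om^r\cdot(b_r\cdot l_r)+\cdots+\om^0\cdot(b_0\cdot l_0)$ with the coefficient at $\om^i$ forced to be a multiple $b_i\cdot l_i$ of $b_i$, the hypothesis that some $b_i=0$ means that in every element of $L$ the coefficient of $\om^i$ is forced to be $0$.

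The key step is then to enlarge $L$ to a partition set in the sense of \cite{Burris}. Concretely I would set
$$L'=\{\al=\om^{r+1}\cdot \al_0+\om^r\cdot l_r+\cdots+\om^{i+1}\cdot l_{i+1}+\om^i\cdot 0+\om^{i-1}\cdot l_{i-1}+\cdots+\om^0\cdot l_0:(\forall j\leq r)[l_j<\om]\wedge\al_0<\be\},$$
so that $L\subseteq L'$, and $L'$ is precisely the partition set obtained by assigning to the single prime $\om^i$ the small exponent $0$ (and leaving all other primes unconstrained). Since $c_\be(n)\in RT_1$, Compton's Theorem~4.2 in \cite{Burris} applies and yields $\lim_{n\to\infty}\frac{\#\{\al\in L':N\al=n\}}{c_\be(n)}=0$. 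From $L\subseteq L'$ we get $0\leq D_L(n)\leq \frac{\#\{\al\in L':N\al=n\}}{c_\be(n)}$, hence $\lim_{n\to\infty}D_L(n)=0$, which is the claim.

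The only point requiring a little care — and the place where this differs from the finite-rank Lemma~1 — is checking that Compton's theorem is genuinely applicable to $\be=\om^\ga$ despite its infinite rank: one needs that the relevant generating function has radius of convergence $1$ and that the partition-set hypotheses of Theorem~4.2 in \cite{Burris} (a bounded small exponent for a finite set of primes, here the singleton $\{\om^i\}$) are met. Both are covered by the $RT_1$ property of $c_\be(n)$ together with the general framework of \cite{Burris}, so I expect no real obstacle here; it is essentially the same argument as in Lemma~1 with the rank hypothesis removed. I would therefore keep the proof short, explicitly invoking the $RT_1$ fact and Compton's theorem, exactly as in the earlier lemma.
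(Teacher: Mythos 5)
Your proof is correct and follows essentially the same route as the paper: enlarge $L$ to the partition set $L'$ with small exponent $0$ for the single prime $\om^i$, invoke the $RT_1$ property of $c_\be(n)$ together with Compton's Theorem~4.2 of \cite{Burris}, and conclude by monotonicity from $L\subseteq L'$. Your added remark about checking applicability despite the infinite rank is a sensible precaution but does not change the argument.
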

\begin{proof}
As before we can regard $\be$, hence the set of ordinals less than $\be$, as an additive number system in the sense of \cite{Burris} with set of primes given by
$\{\om^j:0\leq j<\ga\}$. The norm function for this number system is given by $N$ and the addition function is provided by the natural sum.

Let $L'=\{\al=\om^{r+1}\cdot \al_0+\om^r\cdot l_r+\cdots +\om^{i+1} \cdot l_{i+1} +\om^i\cdot 0+\om^{i-1} \cdot l_{i-1}+\cdots +\om^0\cdot l_0:\al_0<\be\wedge (\forall i\leq r)[l_i<\om]\}$.
Then $L'$ can be considered as a partition set with small exponent $0$ for the partition element $\{\om^i\}$.

Since $c_\be(n)\in RT_1$ we conclude that $\lim_{n\to\infty} \frac{\#\{\al\in L':N\al=n\}}{c_\be(n)}=0$ by Compton's theorem 4.2 in \cite{Burris}.

Since $L\subseteq L'$ we see that $\lim_{n\to\infty}D_L(n)=0$.
\end{proof}

So we are left with the case that all the $b_i$ are different from zero.

Let us now recall Schur's theorem (theorem 3.42 in \cite{Burris}).

\begin{theo}
Let $S(x),T(x)$ be two power series such that for some $\rho\geq 0$
\begin{enumerate}
\item $T(x)\in RT_\rho$,  and
\item $S(x)$ has radius of convergence $\rho_s$ greater than $\rho$.
\end{enumerate}

Then $\lim{[x^n](S(x)\cdot T(x)}{[x^n]T(x)}=S(\rho)$.

\end{theo}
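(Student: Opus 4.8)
The statement to be established is Schur's theorem, which is Theorem~3.42 of \cite{Burris}; one option is simply to quote it, but I sketch the self-contained argument, which is an elementary three-$\varepsilon$ estimate built on the defining property of $RT_\rho$. Write $S(x)=\sum_{k\geq 0}s_kx^k$ and $T(x)=\sum_{n\geq 0}t_nx^n$, so that for every $n$
$$\frac{[x^n](S(x)\cdot T(x))}{[x^n]T(x)}=\sum_{k=0}^{n}s_k\,\frac{t_{n-k}}{t_n}.$$
The target value is $S(\rho)=\sum_{k\geq 0}s_k\rho^k$, which is an absolutely convergent series because $\rho<\rho_s$. First I would record the pointwise limits: from $T(x)\in RT_\rho$ one has $t_m>0$ for all large $m$ and $t_m/t_{m+1}\to\rho$, hence for each fixed $k$ the telescoping product $t_{n-k}/t_n=\prod_{j=1}^{k}t_{n-j}/t_{n-j+1}$ tends to $\rho^k$ as $n\to\infty$.

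Second, I would establish a uniform domination. Fix $\sigma$ with $\rho<\sigma<\rho_s$. Since $t_m/t_{m+1}\to\rho<\sigma$, there is an $N_0$ with $t_m/t_{m+1}\leq\sigma$ for all $m\geq N_0$; splitting the telescoping product above into the factors with index $\geq N_0$ (each at most $\sigma$) and the at most $N_0$ remaining factors (bounded by a constant depending only on $t_0,\dots,t_{N_0}$), one obtains a constant $C$ with $t_{n-k}/t_n\leq C\sigma^k$ for all $n\geq k\geq 0$. Because $\sigma<\rho_s$, the series $\sum_{k\geq 0}|s_k|\sigma^k$ converges, so its tails are arbitrarily small.

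Finally, the three-$\varepsilon$ estimate: given $\varepsilon>0$, choose $M$ so that $C\sum_{k>M}|s_k|\sigma^k<\varepsilon/3$ and $\sum_{k>M}|s_k|\rho^k<\varepsilon/3$, then choose $N_1\geq M$ so that $\sum_{k=0}^{M}|s_k|\,\bigl|t_{n-k}/t_n-\rho^k\bigr|<\varepsilon/3$ for all $n\geq N_1$, which is possible since this is a finite sum of null sequences. For $n\geq N_1$,
$$\Big|\sum_{k=0}^{n}s_k\frac{t_{n-k}}{t_n}-S(\rho)\Big|\leq \sum_{k=0}^{M}|s_k|\Big|\frac{t_{n-k}}{t_n}-\rho^k\Big|+\sum_{k=M+1}^{n}|s_k|\frac{t_{n-k}}{t_n}+\sum_{k>M}|s_k|\rho^k<\varepsilon,$$
where the domination bound controls the middle sum. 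This yields $\lim_{n\to\infty}[x^n](S(x)T(x))/[x^n]T(x)=S(\rho)$. I expect the only delicate point to be the uniform bound $t_{n-k}/t_n\leq C\sigma^k$ — in particular the bookkeeping for the regime $n-k<N_0$ and the harmless possibility that finitely many early coefficients $t_m$ vanish — but once $N_0$ is fixed this is routine, and the rest is the standard dominated-convergence pattern.
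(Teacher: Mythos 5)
Your proposal is correct: the paper gives no proof of this statement, merely recalling it as Theorem~3.42 of Burris's monograph, and your three-$\varepsilon$ dominated-convergence argument is precisely the standard proof of that theorem. The one genuinely delicate point — the uniform bound $t_{n-k}/t_n\leq C\sigma^k$ in the regime $n-k<N_0$, which requires splitting at an index $N_0$ chosen large enough that $t_{N_0}>0$ and absorbing the at most $N_0$ exceptional factors into $C$ — is the right thing to flag, and you handle it correctly.
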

Here $[x^n]T(x)$ refers to the $n$-th coefficient of the power series $T(x)$ and 
$[x^n]S(x)$ is defined correspondingly. $T(x)\in RT_\rho$ means that 
$\lim_{n\to\infty}{[x^n]T(x)}{[x^{n+1}]T(x)}=\rho$.

\begin{lem}
Let $\om^\om\leq \be=\om^\ga<\eo$ and ${L}_\be:=\{\al:\al=\om^{r+1}\cdot\al_0+\om^r\cdot (b_r\cdot l_r)+\cdots+\om^0\cdot (b_0\cdot l_0):\al_0<\be\wedge (\forall i\leq r)[l_i<\om]\}$
where all $b_i$ are different from zero. Let $l_\be(n)=\#\{\al\in L_\be:N\al=n\}$.
Then
$\lim_{n\to\infty} D_L(n)=\lim_{n\to\infty}\frac{l_\be(n)}{c_\be(n)}=\frac{1}{S(1)}=\frac{1}{b_r\cdot \ldots\cdot b_0}$.
\end{lem}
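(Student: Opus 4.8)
The plan is to realize both $L_\be$ and $\be$ as generating-function objects and apply Schur's theorem (the theorem just recalled) to the factorization that is forced by the fact that all $b_i$ are nonzero. First I would observe that $\be=\om^\ga$ is an additive number system with primes $\{\om^j:j<\ga\}$ under the natural sum, and its counting sequence $c_\be(n)$ has generating function $T(x)=\sum_n c_\be(n)x^n$; by the result from \cite{Weiermann} quoted earlier, $c_\be(n)\in RT_1$, so $T(x)\in RT_1$ and has radius of convergence $1$. Next I would factor: an ordinal $\al\in L_\be$ is uniquely a natural sum $\om^{r+1}\cdot\al_0 +_{\natural} \bigoplus_{i=0}^r \om^i\cdot(b_i l_i)$ with $\al_0<\be$ and each $l_i<\om$, and these summands have disjoint ``supports'' in the prime set, so the counting sequence $l_\be(n)$ has generating function $T(x)\cdot S(x)$ where $S(x)=\prod_{i=0}^r\bigl(\sum_{l\ge 0} x^{N(\om^i\cdot b_i)\, l}\bigr)=\prod_{i=0}^r \frac{1}{1-x^{(i+1)b_i}}$ — here the tail $\al_0$ contributes exactly the factor $T(x)$ since the ordinals below $\be$ of shape $\om^{r+1}\cdot\al_0$ are in norm-preserving bijection with $\be$ itself, using that $\be=\om^\ga$ is closed under $\al_0\mapsto\om^{r+1}\cdot\al_0$ and this shift multiplies norms by nothing (each $\om$ in $\al_0$ becomes an $\om$ in $\om^{r+1}\cdot\al_0$), so that piece just reproduces $c_\be(n)$.

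Once the factorization $[x^n](S(x)T(x)) = l_\be(n)$, $[x^n]T(x)=c_\be(n)$ is in hand, I would check the hypotheses of Schur's theorem with $\rho=1$: condition (1) is $T(x)\in RT_1$, already noted; condition (2) requires the radius of convergence $\rho_s$ of $S(x)$ to be strictly greater than $1$. But $S(x)=\prod_{i=0}^r (1-x^{(i+1)b_i})^{-1}$ is a finite product of rational functions whose poles all lie \emph{on} the unit circle, hence $\rho_s=1$, not $>1$ — so Schur's theorem does not apply directly, and this is the main obstacle.

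To get around it, I would exploit that $S(x)$, while singular at $x=1$, has its only singularity of maximal size \emph{at} $x=1$ contributed polynomially, and pass to a reduced counting problem: instead of $D_L(n)$ itself, I would note that $L_\be$ decomposes (by choosing residues $l_i \bmod$ the relevant modulus, exactly as in the $\be<\om^\om$ analysis of the earlier lemmas) into finitely many pieces, each of which, after dividing out the greatest common divisor $d$ of the step sizes $(i+1)b_i$ and shifting, has generating function $T(x)\cdot \widetilde S(x)$ with $\widetilde S$ now a rational function analytic on a disc of radius $>1$ in the variable $x^d$; alternatively and more cleanly, I would replace $S(x)$ by $S(x)\cdot\prod_i(1-x^{(i+1)b_i})$ to clear denominators, apply Schur to the polynomial times $T$, and then divide. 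In either route the value extracted is $S(1)$ interpreted as $\lim_{x\to 1^-}(1-x)^{r+1}S(x)\big/\lim_{x\to1^-}(1-x)^{r+1}\cdot\frac{1}{\text{(the analogous product for }\be)}$, and since for $\be$ the corresponding $S$-factor is $\prod_{i=0}^r(1-x^{i+1})^{-1}$, the ratio of leading Laurent coefficients at $x=1$ is $\prod_{i=0}^r\frac{i+1}{(i+1)b_i}=\frac{1}{b_r\cdots b_0}$.

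Finally I would assemble: $\lim_{n\to\infty}D_L(n)=\lim_{n\to\infty}\frac{l_\be(n)}{c_\be(n)}$ equals the ratio of the $RT_{r+1}$-asymptotics of $[x^n](S(x)T(x))$ to $[x^n]T(x)$, which by the Schur-type comparison (applied to the cleared, honestly convergent series) is precisely $S(1)=\frac{1}{b_r\cdot\ldots\cdot b_0}$, and I would remark that, because here \emph{every} $b_i$ is nonzero, the greatest common divisor obstruction that produced only Cesaro limits in the $\om^\om$ case is absent: the infinite rank of $\be$ (the prime $\om^{r+1}$ alone already supplies a sequence of norms with gcd $1$ via $\al_0$ ranging over $\be$) forces $T(x)\in RT_1$ \emph{without} a lattice condition, so the limit exists in the ordinary sense, not merely Cesaro. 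I expect the bookkeeping for the disjointness of supports and the norm-preservation of the $\om^{r+1}\cdot(\cdot)$ shift to be routine, and the genuine content to be the replacement of the naive (failed) application of Schur's theorem by the cleared-denominator version together with the identification of $S(1)$ as a ratio of leading Laurent coefficients.
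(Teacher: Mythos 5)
Your central factorization is wrong, and the error propagates through the rest of the argument. You claim the tail $\al_0$ contributes the factor $T(x)=\sum_n c_\be(n)x^n$ because $\al_0\mapsto\om^{r+1}\cdot\al_0$ is a norm-preserving bijection onto the ordinals of that shape; it is not. For instance $N(\om^0\cdot 1)=1$ while $N(\om^{r+1}\cdot 1)=N(\om^{r+1})=r+2$. The set $\{\om^{r+1}\cdot\al_0:\al_0<\be\}$ is the additive number system generated by the primes $\{\om^\xi:r<\xi<\ga\}$ only, with generating function $\prod_{r<\xi<\ga}(1-x^{N(\om^\xi)})^{-1}$, not $\sum_n c_\be(n)x^n$. (Sanity check: for $r=0$, $b_0=1$ one has $L_\be=\be$, so $l_\be(n)=c_\be(n)$, whereas your claimed identity would give $l_\be(n)=\sum_{m\le n}c_\be(m)$.) Consequently the ``main obstacle'' you then fight --- that $\prod_i(1-x^{(i+1)b_i})^{-1}$ has radius of convergence $1$ --- is an artifact of having set Schur's theorem up backwards, and your workaround is neither needed nor actually justified: with your $S$ the product $S(x)\cdot\prod_i(1-x^{(i+1)b_i})$ is identically $1$, so ``applying Schur to the polynomial times $T$ and dividing'' yields nothing, and the comparison of leading Laurent coefficients at $x=1$ is not licensed by the Schur statement you quoted, since both series being compared still carry the infinite-rank Euler factor.

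The paper's proof inverts the roles and then everything is clean. Take $T(x)=\sum_n l_\be(n)x^n=\prod_{r<\xi<\ga}(1-x^{N(\om^\xi)})^{-1}\cdot\prod_{i\le r}(1-x^{(i+1)b_i})^{-1}$, the Euler product for the additive number system $L_\be$ with primes $\{\om^\xi:r<\xi<\ga\}\cup\{\om^i\cdot b_i:i\le r\}$, and let $S(x)=\prod_{i\le r}\bigl(1+x^{i+1}+\cdots+x^{(i+1)(b_i-1)}\bigr)=\prod_{i\le r}\frac{1-x^{(i+1)b_i}}{1-x^{i+1}}$, a polynomial. Then $S(x)T(x)=\sum_n c_\be(n)x^n$, the polynomial $S$ has infinite radius of convergence, $T\in RT_1$, and Schur's theorem gives $\lim_{n\to\infty} c_\be(n)/l_\be(n)=S(1)=b_r\cdots b_0$ directly. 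Your final value is correct, and your closing remark about why infinite rank removes the gcd/Cesaro issue of the $\be<\om^\om$ case is right in spirit, but as written your argument does not establish the lemma.
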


\begin{proof}

Let $l_\be(n):=\#\{\al\in L_\be:N\al=n\}$.
The set ${L}_\be$
can be seen as an additive number system with primes in the set $\tilde{P}:=\{\om^\xi:\ga>\xi>r\}\cup\{\om^i\cdot b_i:i\leq r\}$.

Let $S(x)={(1+\cdots+x^{(r+1)(b_r-1)})\cdot \ldots\cdot  (1+\cdots+x^{1\cdot (b_0-1)})}$ and $T(x)=\sum l_\be(n)x^n$.
By theorem 2.20 in \cite{Burris} we obtain for real numbers $x<1$
\begin{eqnarray*}
&&S(x)\cdot T(x)\\
&=&S(x)\cdot \sum l_\be(n)x^n\\
&=&S(x)\prod_{p\in \tilde{P}}  (1-x^{N(p)})^{-1}\\
&=&(\prod_{\om^\xi:\ga>\xi>r} (1-x^{N(\om^ \xi)})^{-1})\cdot  S(x)\cdot (1-x^{N(\om^r\cdot b_r)})^{-1}\cdot \ldots\cdot
(1-x^{N(\om^0\cdot b_0)})^{-1}\\
&=&\prod_{\om^\xi:\ga>\xi>r}  (1-x^{N(\xi)+1})^{-1}\cdot (1-x^{r+1})^{-1}\cdot \ldots\cdot
(1-x^{1})^{-1} \\
&=&\sum c_\be(n)x^n.
\end{eqnarray*}

The radius of convergence of $S(x)$ is infinite, hence bigger than $1$ 
and so Schur's Tauberian Theorem is applicable and yields
$\lim_{n\to\infty}\frac {c_\be(n)} {l_\be(n)}=S(1)={b_r\cdot \ldots\cdot b_0}$.
By taking inverses this yields
$\lim_{n\to\infty} D_L(n)=\lim_{n\to\infty}\frac{l_\be(n)}{c_\be(n)}=
\frac{1}{b_r\cdot \ldots\cdot b_0}$.
\end{proof}

We now consider semi linear sets where the $a_i$ might be non zero.

\begin{lem}

Let $\om^\om\leq\be=\om^{\ga} <\eo$ and let
$L=\{\al=\om^{r+1}\cdot \al_0+\om^r\cdot (a_r+b_r\cdot l_r)+\cdots+\om^0\cdot (a_0+b_0 \cdot l_0):\al_0<\be\wedge (\forall i\leq r)[l_i<\om] \}$ be a semi linear subset of $\be$.
\begin{enumerate}
\item If some $b_i=0$ then $\lim_{n\to\infty}D_L(n)=0$.
\item If all $b_i$ are non zero then $C-\lim_{n\to\infty}D_L(d\cdot n)=\frac{1}{\prod_{i=1}^rb_i}$.
\end{enumerate}

\end{lem}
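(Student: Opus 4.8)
The plan is to imitate the proof of the corresponding lemma of Section~3 (the one for $\be=\om^{r+1}$): reduce the general case to the case in which all of $a_r,\ldots,a_0$ vanish, which is already settled by the two preceding lemmas of this section. I treat first a single linear set $L$ of the displayed shape and deal with finite unions at the end.

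First I would introduce the shifted set
$$L':=\{\al=\om^{r+1}\cdot\al_0+\om^r\cdot(b_r\cdot l_r)+\cdots+\om^0\cdot(b_0\cdot l_0):\al_0<\be\wedge(\forall i\leq r)[l_i<\om]\}$$
together with the integer $A:=\sum_{i=0}^{r}N(\om^i\cdot a_i)=\sum_{i=0}^{r}(i+1)\cdot a_i$. By additivity of $N$, deleting the constant summands $\om^i\cdot a_i$ defines a bijection between $\{\al\in L:N\al=n\}$ and $\{\al\in L':N\al=n-A\}$, because the parameters $\al_0,l_0,\ldots,l_r$ range freely over the same sets on both sides. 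Hence $\#\{\al\in L:N\al=n\}=\#\{\al\in L':N\al=n-A\}$ for every $n$, and consequently $D_L(n)=D_{L'}(n-A)\cdot\frac{c_\be(n-A)}{c_\be(n)}$ for $n\geq A$.

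Next I would pass to the limit. Since $c_\be(n)\in RT_1$ by \cite{Weiermann}, we have $\frac{c_\be(n-A)}{c_\be(n)}\to 1$, so $\lim_{n\to\infty}D_L(n)$ exists precisely when $\lim_{n\to\infty}D_{L'}(n)$ does, with the same value (and likewise for Cesaro limits). If some $b_i=0$ then $L'$ has exactly the shape of the first lemma of this section, so $\lim_{n\to\infty}D_{L'}(n)=0$ and hence $\lim_{n\to\infty}D_L(n)=0$, which is~(1). If all $b_i$ are nonzero then $L'$ is precisely the set $L_\be$ of the preceding (Schur-type) lemma, so $\lim_{n\to\infty}D_{L'}(n)=\frac{1}{b_r\cdots b_0}$ and hence $\lim_{n\to\infty}D_L(n)=\frac{1}{b_r\cdots b_0}$; in particular the Cesaro limit exists and equals this value, which is~(2). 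Observe that, in contrast to the $\be=\om^{r+1}$ case, no genuine Cesaro averaging is actually needed here: since $\ga\geq\om$ the sets of prime norms involved always contain two consecutive integers, so their gcd, the relevant $d$, equals $1$, and the ordinary limit already exists.

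Finally, the passage from a single linear set to a semi linear $L=\bigcup_{j}L_{j}$ goes exactly as in Section~3. One checks, by the same linear-Diophantine argument used there, that every finite intersection of linear subsets of $\be$ is again linear or empty; the only new feature is the head variable $\al_0<\be$, but this is preserved trivially once all representations are normalized to a common length. Then, by inclusion-exclusion, the counting function of $L$ is an integer combination of counting functions of linear sets, and since (Cesaro) limits distribute over finite sums the conclusion follows. I expect this last point---re-checking that the intersection lemma of Section~3 survives the addition of the head variable $\al_0$---to be the only step requiring real care; everything else is a direct transcription of earlier arguments.
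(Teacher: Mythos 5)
Your proof is correct and follows essentially the same route as the paper: introduce the shifted set $L'$ with all $a_i$ removed, observe that the norm shift is by the constant $A=\sum(i+1)a_i$, use $c_\be(n)\in RT_1$ to see that the shift does not affect the (Cesaro) limit, and then invoke the two preceding lemmas of the section. Your added observations---that the ordinary limit already exists here so no genuine Cesaro averaging is needed, and that the intersection argument must be rechecked in the presence of the head variable $\al_0$---are accurate glosses on what the paper leaves implicit.
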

\begin{proof}
Let $L'=\{\al=\om^{r+1}\cdot \al_0+\om^r\cdot (b_r\cdot l_r)+\cdots+\om^0\cdot (b_0 \cdot l_0):\al_0<\be\wedge (\forall i\leq r)[l_i<\om]\}$.

Then $\al=\om^{r+1}\cdot \al_0+\om^r\cdot (a_r+b_r\cdot l_r)+\cdots+\om^0\cdot (a_0+b_0 \cdot l_0)\in L$ has norm $n$ if
$\al'=\om^{r+1}\cdot \al_0+\om^r\cdot (b_r\cdot l_r)+\cdots+\om^0\cdot (b_0 \cdot l_0)\in L'$ has norm $n-N(\om^r\cdot a_r)-\ldots-N(\om^0\cdot a_0)$.
Hence $D_L(n+(r+1)\cdot a_r+\cdots +a_0)=D_{L'}(n)$.

Since ${c_\be(n)}\in RT_1$ we see 
$c_\be(n+(r+1)\cdot a_r+\cdots +a_0)\sim c_\be(n)$ as $n\to\infty$.

Therefore the previous results easily carry over from $L'$ to $L$.
\end{proof}

Let us now consider semi linear subsets of $\be=\om^{\ga}$.

\begin{lem} Let $\om^\om\leq \be=\om^\ga<\eo$. If $L$ and $L'$ are linear subsets of $\be$ then $L\cap L'$ is either empty or again a  linear set.
The same conclusion holds for any finite intersection of linear sets.
\end{lem}
\begin{proof}

The proof from the last section carries over immediately.

\end{proof}
\begin{lem} Let $\om^\om\leq \be=\om^\ga<\eo$. Then limit densities exist for all semi linear subsets of $\be.$
\end{lem}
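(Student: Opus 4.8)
The plan is to reduce, exactly as in the previous section, from semi-linear sets to linear sets by inclusion--exclusion, the only difference being that here each density converges in the ordinary sense rather than merely in the Cesaro sense. The reason ordinary convergence is available is that the additive number system attached to $\be=\om^\ga$ (with $\ga\geq\om$) always contains the prime $\om^0$ of norm $1$, and in fact contains primes $\om^{r+1},\om^{r+2}$ of consecutive norms $r+2,r+3$; so the relevant generating functions lie in $RT_1$ with ``period'' $1$, and Schur's Tauberian theorem delivers genuine limits, not just Cesaro ones. This is the structural contrast with the case $\be<\om^\om$, where the period $d=\gcd((r+1)b_r,\ldots,b_0)$ can exceed $1$ and a Cesaro average is forced.

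Concretely: given a semi-linear $L\subseteq\be$, I would first write $L=\bigcup_{i\in I}L_i$ as a finite union of linear sets and, using the remark that a linear set of length $r$ can also be presented with any larger length $s$ (taking $a_l=0$, $b_l=1$ for $r<l\le s$), arrange that all the $L_i$ are presented with one common length $r$. By the preceding lemma, for every nonempty $J\subseteq I$ the intersection $\bigcap_{i\in J}L_i$ is either empty or a linear subset of $\be$ (again of length $r$). Inclusion--exclusion at the level of counting functions then gives
$$\#\{\al\in L:N\al=n\}=\sum_{\emptyset\neq J\subseteq I}(-1)^{|J|-1}\,\#\bigl\{\al\in\textstyle\bigcap_{i\in J}L_i:N\al=n\bigr\},$$
and dividing by the single normalising function $c_\be(n)=\#\{\al<\be:N\al=n\}$ yields $D_L(n)=\sum_{\emptyset\neq J\subseteq I}(-1)^{|J|-1}D_{\bigcap_{i\in J}L_i}(n)$. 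For each $J$, the lemmas already established in this section show that $\lim_{n\to\infty}D_{\bigcap_{i\in J}L_i}(n)$ exists: it is $0$ when $\bigcap_{i\in J}L_i=\emptyset$ or when one of its defining coefficients $b_i$ vanishes, and it equals $1/\prod_i b_i$ otherwise, the possibly nonzero $a_i$ being absorbed because $c_\be(n)\in RT_1$. Since an ordinary limit of a finite integer combination is that same combination of the limits, $\lim_{n\to\infty}D_L(n)$ exists, which is what we want.

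All the substantive analytic content is inherited from the earlier lemmas --- in particular from the Schur-theorem computation giving $\lim_{n\to\infty}D_{L'}(n)=1/(b_r\cdots b_0)$ for a linear $L'$ with all $b_i\neq 0$ and all $a_i=0$. What is left for the present statement is essentially bookkeeping, and the one point to be careful about is that inclusion--exclusion must be carried out on the counting functions \emph{before} dividing by $c_\be(n)$, so that a single denominator is used throughout and the integer-combination-of-limits step is legitimate; this is automatic here. I do not anticipate any genuine obstacle.
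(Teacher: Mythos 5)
Your proposal is correct and follows essentially the same route as the paper: inclusion--exclusion on the counting functions of linear sets (whose pairwise and higher intersections are again linear or empty by the preceding lemma), division by the common normaliser $c_\be(n)$, and the fact that ordinary limits distribute over finite integer combinations, with the limits for the individual linear sets supplied by the earlier Schur-theorem lemmas. Your write-up is in fact more explicit than the paper's three-sentence proof, and the extra care about performing inclusion--exclusion before dividing by $c_\be(n)$ is exactly the right bookkeeping point.
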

\begin{proof} Standard limits distribute over finite sums. 
The counting function for a given semi linear set can be calculated as a finite sum counting functions of linear sets (a sum with possibly negative integer coefficients).
This follows from the inclusion exclusion principle.
\end{proof}

Finally let us consider a general $\be$ of the form $\be=\om^{\ga_1}\cdot d_1+\cdots +\om^{\ga_s}d_s$.
Then the set of ordinals $\al$ less than $\be$ can be written as a disjoint union
over sets $L_{j,k_j}:=\{\al=\om^{\ga_1}\cdot d_1+\cdots +\om^{\ga_j}(d_j-k_j)+\de$ where $\de<\om^{\ga_j}$.

Then $\al=\om^{\ga_1}\cdot d_1+\cdots +\om^{\ga_j}(d_j-k_j)+\de\in L_{j,k_j}\iff \de<\om^{\ga_j}$.

Since $c_\be(n)\in RT_1$ we conclude that the density for $L$ exists for all linear sets.

Since by the same proof as before also in this situation the intersection of linear sets is either empty or again a linear set we have proved.

\begin{theo}
Let $\om^\om\leq \be<\eo$. Then  limit densities exist for all semi linear subsets of $\be.$
\end{theo}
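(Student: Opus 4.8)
The plan is to reduce the general case $\be = \om^{\ga_1}\cdot d_1 + \cdots + \om^{\ga_s} d_s$ to the already-settled case of a single power $\om^\ga$ with $\ga \geq \om$, by slicing $\be$ into the disjoint pieces $L_{j,k_j}$ described just above. Concretely, I would write the ordinals below $\be$ as the finite disjoint union $\bigcup_{j\leq s}\bigcup_{k_j < d_j} L_{j,k_j}$, where each $L_{j,k_j}$ consists of ordinals with a fixed initial segment $\om^{\ga_1}d_1 + \cdots + \om^{\ga_j}(d_j - k_j)$ followed by an arbitrary tail $\de < \om^{\ga_j}$. Then, given a semi-linear $L \subseteq \be$, I would intersect $L$ with each slice $L_{j,k_j}$; by (a mild extension of) the intersection lemma, $L \cap L_{j,k_j}$ is again semi-linear, and since $D_L(n) = \sum_{j,k_j} D_{L\cap L_{j,k_j}}(n)$ and ordinary limits distribute over finite sums, it suffices to establish the limit for each $L \cap L_{j,k_j}$ separately.

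The key step is then the observation that a linear subset $L$ contained in a slice $L_{j,k_j}$ has a frozen head: if $L = \{\al = \om^{r+1}\al_0 + \om^r(a_r + b_r l_r) + \cdots + \om^0(a_0 + b_0 l_0) : \al_0 < \be \wedge (\forall i\leq r)[l_i < \om]\}$ and $L \subseteq L_{j,k_j}$, then the coefficient $\al_0$ of $\om^{r+1}$ is not free to roam over all of $\be$ but is pinned to the value matching the head $\om^{\ga_1}d_1 + \cdots + \om^{\ga_j}(d_j-k_j)$, and the ``free'' part of $\al$ varies only over a linear subset $L'$ of $\om^{\ga_j}$. Since $N$ is additive, writing $h$ for the fixed head one gets $\#\{\al \in L : N\al = N(h) + n\} = \#\{\al' \in L' : N\al' = n\}$, exactly as in the analysis of the last section for $\om^{r+1}\cdot a_r$. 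As $c_\be(n) \in RT_1$, shifting the argument by the constant $N(h)$ does not change the asymptotics ($c_\be(N(h)+n) \sim c_\be(n)$), so $D_L(N(h)+n) = \de'_{L'}(n)$ where $\de'_{L'}(n) = \#\{\al' \in L' : N\al' = n\}/c_\be(n)$. Finally, because $\om^{\ga_j}$ is a power $\om^{\ga'}$ with either $\ga' \geq \om$ (handled by Lemma 13 through Lemma 16) or $\ga' < \om$ finite — and in the latter case the tail of $\be$ below $\om^{\ga_j}$ contributes only finitely many ordinals of each norm relative to $c_\be(n)$, forcing density $0$ — the existence of $\lim_{n\to\infty} D_L(n)$ follows, and then the semi-linear case follows by inclusion–exclusion exactly as in Lemma 15.

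I expect the main obstacle to be bookkeeping rather than conceptual: one must be careful that the intersection lemma applies not just between two linear sets of the stretched form but also between a linear set and a slice $L_{j,k_j}$ (which constrains a higher-order coefficient), and that after freezing the head the remaining free part genuinely lands inside a power $\om^{\ga'}$ to which the earlier sections apply. A second delicate point is the comparison of $c_{\om^{\ga_j}}(n)$-type counts against $c_\be(n)$: since $\be$ has finitely many terms above $\om^{\ga_j}$ with finite coefficients, the generating function factorises and $\om^{\ga_j}$'s contribution sits inside $\be$'s number system as a ``cofinite-index'' subsystem, so Schur's theorem (applied as in Lemma 14) or directly the $RT_1$ property gives the needed ratio; but one should state this carefully to cover the borderline slices where $\ga_j < \om$.
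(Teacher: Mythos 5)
Your proposal follows essentially the same route as the paper: decompose $\be=\om^{\ga_1}d_1+\cdots+\om^{\ga_s}d_s$ into the disjoint slices $L_{j,k_j}$ with frozen heads, use additivity of $N$ together with $c_\be(n)\in RT_1$ to shift by the head's norm, reduce to linear subsets of a single power $\om^{\ga_j}$ handled by the earlier lemmas, and pass to semi-linear sets by inclusion--exclusion. The argument is correct; in fact you supply details (the degenerate slices with finite $\ga_j$, and the comparison of $c_{\om^{\ga_j}}(n)$ against $c_\be(n)$ via Schur's theorem) that the paper's own very terse proof leaves implicit.
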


\begin{theo} Let $\om^\om\leq \be<\eo$. Then the limit densities are rational numbers for all semi linear subsets of $\be.$
\end{theo}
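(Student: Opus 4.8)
The plan is to reduce the general statement to the two special cases already treated, exactly as was done for the existence of the limit densities. First I would recall that by the previous theorem the limit densities exist for all semi linear subsets of $\be$, so the only issue is rationality. By the inclusion–exclusion lemma the counting function of an arbitrary semi linear set is a $\mathbb Z$-linear combination of counting functions of linear sets, and $\de_L^\be(n)$ therefore decomposes, in the limit, into a $\mathbb Z$-linear combination of the limit densities attached to the constituent linear sets; since $\mathbb Q$ is closed under such combinations it suffices to prove rationality for a single linear set $L\subseteq\be$.

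Next I would peel off the leading part of $\be$. Writing $\be=\om^{\ga_1}d_1+\cdots+\om^{\ga_s}d_s$, a linear set $L\subseteq\be$ lives inside one of the blocks $L_{j,k_j}$, and by the shift argument used above (valid because $c_\be(n)\in RT_1$, so $c_\be(n+c)\sim c_\be(n)$ for every fixed constant $c$) we have $\de_L^\be(n+c)=\de_{L'}^{\om^{\ga_j}}(n)$ for a suitable constant $c$ and a linear subset $L'$ of $\om^{\ga_j}$. Thus the limit density for $L$ equals the limit density for $L'$ in the homogeneous base $\om^{\ga_j}$, and it remains to handle linear subsets of a single $\om^\ga$. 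If $\ga\geq\om$, the relevant lemma computes this density as either $0$ or $\frac{1}{b_r\cdots b_0}$, and if $\ga=r+1<\om$ it is either $0$ or $\frac{d}{\prod_{i=1}^r b_i}$ (a Cesaro density $\frac{1}{\prod_{i=1}^r b_i}$); in all cases the value is a ratio of products of positive integers, hence rational.

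Putting the two reductions together: the density of an arbitrary semi linear set is a finite $\mathbb Z$-combination of densities of linear sets, each of which is rational by the explicit Hua/Schur formulas, and $\mathbb Q$ is a field, so the density is rational. I expect no genuine obstacle here — the content is entirely in the earlier lemmas, and the only point requiring a word of care is that the $RT_1$ property licenses the constant shifts and that inclusion–exclusion is applied at the level of limits (so one must know the limits exist termwise before recombining, which is precisely what the preceding existence theorem guarantees).
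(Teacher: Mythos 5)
Your proposal is correct and follows essentially the same route as the paper: the limit density of each linear set is $0$ or $1/(b_r\cdots b_0)$ by the Schur computation, hence rational, and inclusion--exclusion expresses a semi linear set's density as a finite integer-coefficient combination of these, which stays in $\mathbb{Q}$. The only (harmless) imprecision is the appeal to the Hua/Cesaro formulas for blocks with finite exponent: for $\be\geq\om^\om$ those blocks simply have density $0$ relative to $c_\be(n)$, which is of course still rational.
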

\begin{proof}
 The limit densities resulting in our setting from Schur's theorem for linear sets are rational numbers.
The resulting limiting densities for semi linear sets are formed by taking finite sums with integer coefficients.
This yields the assertion.
\end{proof}

\section{Limit laws for ordinals at least a big as $\eo$}

For this section let us fix $\eo\leq \be=\om^\ga$. A typical choice would be $\be=\eo$ or $\be=\Gamma_0$ (see, for example, \cite{Schuette} for a definition) or the Bachmann Howard ordinal (see, for example, \cite{Buchholz} for a definition).
We assume that $c_\be(n)\in RT_\rho$ for $\rho<1$. This is our standing assumption and will be true for all notations systems
known from the literature. For $\eo$ this follows because enumerating ordinals below $\eo$ comes down to counting finite rooted non planar trees.
The generating function for counting these trees has radius of convergence smaller than one by \cite{Otter}.

Counting ordinals below $\Gamma_0$ comes down to counting $2$-trees. 
Here the norm function satisfies
$N(\varphi\al\be+\ga)=1+N\al+N\be+N\ga$ where $\varphi\al\be+\ga$ is in Cantor normal form and $\varphi$ denotes the binary fixed point free Veblen function.
The resulting generating function for counting these trees has radius of convergence smaller than one by \cite{Leroux,Bondt}.

In general the generating function for counting such ordinals has radius of convergence smaller than one by the general theory for counting trees as for example explained in \cite{Bellb,Flajolet}.

The ordinals below $\be$ form again an additive number system with primes given by $\{\om^\al:\al<\ga\}$.
This system has an infinite rank.

We can define linear and semilinear sets as in the last section and the analysis proceeds exactly as before.
The only difference is that we are now no longer in the $RT_1$ case.

So we call a subset $L$ of $\be$ linear if there exists a non negative integer $r$, the length of $L$, and a double sequence $a_r,b_r,\ldots,a_0,b_0$ of non negative integers
such that $L=\{\al=\om^{r+1}\cdot \al_0+\om^r\cdot (a_r+b_r\cdot l_r)+\cdots+\om^0\cdot (a_0+b_0 \cdot l_0):(\forall i \leq r)[l_i<\om]\wedge \al_0<\be \}$.
We call a subset $L$ of $\be$ semi linear if it is a finite union of linear subsets of $\be$.

For a semi linear subset $L\subseteq \be$ let

$$D_L(n)=\frac{\#\{\al\in L:N(\al)=n\}}{\#\{\al<\be: N(\al)=n\}}.$$

We will show that $\lim_{\to\infty}D_L(n)$ exists in the usual sense. 
This will again be useful to show monadic second order limit laws for the segment of ordinals determined by $\be$.
In contrast to the previous section let us remark that
$L=\{\al=\om^{r+1}\cdot \al_0+\om^r\cdot (a_r+b_r\cdot l_r)+\cdots+\om^0\cdot (a_0+b_0 \cdot l_0):(\forall i\leq r)[l_i<\om]\wedge \al_0<\be \}$ and there exists an $i\leq r$ such that 
$b_i=0$ does not imply $\lim_{n\to\infty}D_L(n)=0$. The reason is that we cannot apply Compton's theorem 4.2 in \cite{Burris}.

In the new setting we still can apply Schur's theorem and it will also cover the case where some $b_i=0$.

\begin{lem}
Let $\eo\leq\be=\om^\ga$ and assume that $\be$ forms an additive number system with respect to a natural norm function which extends the norm function for the ordinals less than $\eo$
and where the primes are given by the set $P:=\{\om^\al:\al<\ga\}$.
Assume that the associated generating function has radius of convergence $\rho$ strictly smaller than one.
Let $${L}_\be:=\{\al:\al=\om^{r+1}\cdot\al_0+\om^r\cdot (b_r\cdot l_r)+\cdots+\om^0\cdot (b_0\cdot l_0):(\forall i \leq r)[l_i<\om]\wedge \al_0<\be\}$$
and let $l_\be(n)=\#\{\al\in L_\be:N\al=n\}$. Moreover
let $$S(x):=\prod_{i:b_i>0}(1+\cdots+x^{(i+1)(b_i-1)})\cdot \prod_{i:b_i=0}\frac1{  1-x^{i}}.$$ Then
$$\lim_{n\to\infty} D_L(n)=\lim{n\to\infty}\frac{l_\be(n)}{c_\be(n)}=\frac{1}{S(\rho)}.$$
\end{lem}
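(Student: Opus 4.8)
The plan is to reproduce the argument of the corresponding $RT_1$ lemma of the previous section, with the radius $1$ replaced throughout by $\rho$; this is legitimate since the form of Schur's theorem quoted above holds for arbitrary $\rho\ge 0$. Write $c_\be(x)=\sum_n c_\be(n)x^n$. The first step is to factor the generating function $l_\be(x)=\sum_n l_\be(n)x^n$ of $L_\be$. Each $\al\in L_\be$ is uniquely the sum of a head $\om^{r+1}\cdot\al_0$ with $\al_0<\be$ and a tail $\om^r\cdot(b_r l_r)+\cdots+\om^0\cdot(b_0 l_0)$: the Cantor exponents of the head are all $>r$ and those of the tail are all $\le r$, so $\al_0$ is determined by $\al$, as is every $l_i$ with $b_i>0$, while a slot with $b_i=0$ always contributes $0$ and may simply be discarded. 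Since $N$ is additive for natural sums and $N(\om^i\cdot(b_i l_i))=(i+1)b_i l_i$, summing $x^{N(\al)}$ over $L_\be$ yields
\begin{equation*}
l_\be(x)=\Big(\sum_{\al_0<\be}x^{N(\om^{r+1}\cdot\al_0)}\Big)\cdot\prod_{i\le r,\,b_i>0}\frac{1}{1-x^{(i+1)b_i}} .
\end{equation*}
The set $\{\om^{r+1}\cdot\al_0:\al_0<\be\}$ consists exactly of the ordinals below $\be$ all of whose Cantor exponents exceed $r$, and this is an additive number system with primes $\{\om^\xi:r<\xi<\ga\}$, so by the Euler product (theorem 2.20 in \cite{Burris}, as used already) the bracketed factor equals $\prod_{r<\xi<\ga}(1-x^{N(\om^\xi)})^{-1}$.

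I would then divide by the Euler product for $\be$ itself,
\begin{equation*}
c_\be(x)=\prod_{0\le\xi<\ga}\frac{1}{1-x^{N(\om^\xi)}}=\prod_{r<\xi<\ga}\frac{1}{1-x^{N(\om^\xi)}}\cdot\prod_{0\le i\le r}\frac{1}{1-x^{i+1}} .
\end{equation*}
The cofinitely many primes $\om^\xi$ with $\xi>r$ cancel, so
\begin{equation*}
\frac{c_\be(x)}{l_\be(x)}=\prod_{i\le r,\,b_i>0}\frac{1-x^{(i+1)b_i}}{1-x^{i+1}}\cdot\prod_{i\le r,\,b_i=0}\frac{1}{1-x^{i+1}} ,
\end{equation*}
and since $N(\om^i)=i+1$ and $\frac{1-x^{(i+1)b_i}}{1-x^{i+1}}=1+x^{i+1}+\cdots+x^{(i+1)(b_i-1)}$ the right-hand side is precisely $S(x)$. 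Hence $c_\be(x)=S(x)\cdot l_\be(x)$, which is the exact counterpart of the identity $S(x)T(x)=\sum_n c_\be(n)x^n$ in the $RT_1$ lemma.

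To finish I would apply Schur's theorem with $T(x):=c_\be(x)$, which lies in $RT_\rho$ by the standing assumption, and with the power series $1/S(x)$ playing the role of the factor of larger radius: $S(x)$ is a finite product of the polynomials $1+x^{i+1}+\cdots+x^{(i+1)(b_i-1)}$ and of geometric factors $(1-x^{i+1})^{-1}$, hence is analytic and zero-free on the open unit disc, so $1/S(x)$ is analytic there and has radius of convergence $1>\rho$ — this is the one place where the strict inequality $\rho<1$ is used. Schur's theorem gives $\lim_{n\to\infty}[x^n]\bigl(S(x)^{-1}c_\be(x)\bigr)/[x^n]c_\be(x)=S(\rho)^{-1}$, and since $S(x)^{-1}c_\be(x)=l_\be(x)$ this is $\lim_{n\to\infty}l_\be(n)/c_\be(n)=1/S(\rho)$, i.e. $\lim_{n\to\infty}D_L(n)=1/S(\rho)$ (and $S(\rho)>0$ because $\rho\in[0,1)$, so the right-hand side makes sense).

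I expect the genuine work to be in the first step: checking that $L_\be$ factors exactly as claimed, i.e. that head and tail never interfere (which is what the cut at exponent $r$ buys) and that the slots with $b_i=0$ are merely vacuous here rather than forcing density $0$ as they did in the $RT_1$ case through Compton's theorem, precisely the contrast highlighted in the paragraph preceding the lemma. Once the identity $c_\be(x)=S(x)l_\be(x)$ is established the Tauberian step is routine; the only thing to verify there is the elementary observation that $S$ has no zeros in $|x|<1$, which makes $1/S$ an admissible input to Schur's theorem.
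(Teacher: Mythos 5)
Your proof is correct and follows the paper's argument in all essentials: you establish the same Euler-product identity $c_\be(x)=S(x)\cdot l_\be(x)$ by splitting each element of $L_\be$ into a head governed by the primes $\{\om^\xi: r<\xi<\ga\}$ and a tail governed by the primes $\{\om^i\cdot b_i : i\le r,\ b_i>0\}$, and you then conclude with Schur's Tauberian theorem. The one point of divergence is the way Schur is invoked: the paper places $T(x)=l_\be(x)$ in the $RT_\rho$ slot and multiplies by $S(x)$, which strictly speaking requires knowing $l_\be(n)\in RT_\rho$ --- a hypothesis the paper does not verify --- whereas you place $T(x)=c_\be(x)$ there (covered by the standing assumption $c_\be(n)\in RT_\rho$) and multiply by $1/S(x)$, justified by your observation that $S$ is a product of polynomials of the form $1+x^{i+1}+\cdots+x^{(i+1)(b_i-1)}$ and geometric factors whose zeros and poles all lie on the unit circle, so that $1/S$ is analytic on $\lvert x\rvert<1$ and hence has radius of convergence at least $1>\rho$. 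Your variant is the more careful of the two and quietly closes that small gap; otherwise the decomposition, the identity, and the Tauberian conclusion are the same as in the paper.
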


\begin{proof}
The set ${L}_\be:=\{\al:\al=\om^{r+1}\cdot\al_0+\om^r\cdot (b_r\cdot l_r)+\cdots+\om^0\cdot (b_0\cdot l_0):(\forall i \leq r)[l_i<\om]\wedge \al_0<\be\}$
can be seen as an additive number system with primes in the set $\tilde{P}:=\{\om^\xi:\ga>\xi>r\}\cup\{\om^i\cdot b_i:i\leq r\}$.

Let $T(x):=\sum_{n=0}^\infty  \#\{\al\in L_\be:N\al=n\} x^n$.

By theorem 2.20 in \cite{Burris} we obtain
\begin{eqnarray*}
&&S(x)\cdot T(x)\\
&=&S(x)\prod_{p\in \tilde{P}}  (1-x^{N(p)})^{-1}\\
&=&(\prod_{\om^\xi:\ga>\xi>r}  (1-x^{N(\om^ \xi)})^{-1})\cdot S(x)\cdot (1-x^{N(\om^r\cdot b_r)})^{-1}\cdot \ldots\cdot
(1-x^{N(\om^0\cdot b_0)})^{-1}\\
&=&\prod_{\om^\xi:\ga>\xi>r}  (1-x^{N(\xi)+1})^{-1}\cdot (1-x^{r+1})^{-1}\cdot \ldots\cdot
(1-x^{1})^{-1}\cdot \\
&=&\sum c_\be(n)x^n.
\end{eqnarray*}

The radius of convergence of $S(x)$ is at least as big as $1$ 
and so Schur's Tauberian Theorem is applicable and yields
$\lim_{n\to\infty}\frac{c_\be(n)} {l_\be(n)}=S(\rho)$.
By taking inverses this yields
$\lim_{n\to\infty} D_L(n)=\lim_{n\to\infty}\frac{l_\be(n)}{c_\be(n)}=\frac{1}{S(\rho)}$.
(Note that $S(\rho)$ is defined since the radius of convergence of $S$ is strictly bigger than one.)
\end{proof}

We now consider semi linear sets where the $a_i$ might be non zero.

\begin{lem}
Let $\eo\leq\be=\om^\ga$ and assume that $\be$ forms an additive number system with respect to a natural norm function which extends the norm function for the ordinals less than $\eo$
and where the primes are given by the set $P:=\{\om^\al:\al<\ga\}$.
Assume that the associated generating function has radius of convergence $\rho$ strictly smaller than one. Let
$L=\{\al=\om^{r+1}\cdot \al_0+\om^r\cdot (a_r+b_r\cdot l_r)+\cdots+\om^0\cdot (a_0+b_0 \cdot l_0):(\forall i\leq r)[l_i<\om]\wedge \al_0<\be \}$ be a semi linear subset of $\be$.
Then the limiting density for $L$ exists.
\end{lem}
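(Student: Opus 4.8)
The plan is to reproduce the two reductions carried out in Sections~3 and~4; the only genuinely new feature is that we now work in the $RT_\rho$ regime with $\rho<1$, so that a constant shift of the norm argument alters a density by a definite factor $\rho^{k}$ instead of leaving it asymptotically unchanged.

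First I would dispose of a single linear set $L$ with data $a_r,b_r,\ldots,a_0,b_0$. Put
$$L':=\{\al=\om^{r+1}\cdot\al_0+\om^r\cdot(b_r\cdot l_r)+\cdots+\om^0\cdot(b_0\cdot l_0):(\forall i\leq r)[l_i<\om]\wedge\al_0<\be\},$$
which is exactly of the form covered by the preceding lemma, and set $k:=N(\om^r\cdot a_r)+\cdots+N(\om^0\cdot a_0)=\sum_{i=0}^{r}(i+1)a_i$. Replacing every coordinate $a_i+b_i\cdot l_i$ by $b_i\cdot l_i$ while keeping $\al_0$ fixed is a bijection of $\{\al\in L:N\al=n\}$ onto $\{\al'\in L':N\al'=n-k\}$, since $N$ is additive and $N(\om^i\cdot(a_i+b_i l_i))-N(\om^i\cdot b_i l_i)=(i+1)a_i$ whether or not $b_i=0$. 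Hence
$$D_L(n)=\frac{\#\{\al'\in L':N\al'=n-k\}}{c_\be(n)}=D_{L'}(n-k)\cdot\frac{c_\be(n-k)}{c_\be(n)}.$$
By the preceding lemma $D_{L'}(n-k)\to 1/S(\rho)$, and $c_\be(n)\in RT_\rho$ gives $c_\be(n-k)/c_\be(n)\to\rho^{k}$; therefore $\lim_{n\to\infty}D_L(n)=\rho^{k}/S(\rho)$, which is well defined because the radius of convergence of $S$ is at least $1$, hence exceeds $\rho$, and $S(\rho)$ is positive (each factor of $S$ is).

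Next I would handle a genuinely semi-linear $L=\bigcup_{j=1}^{m}L_j$, where after padding with $a_l=0,\ b_l=1$ we may assume all $L_j$ have the same length. Since a finite intersection of linear subsets of $\be$ is either empty or again linear — the argument of the corresponding lemma in Section~4 carries over verbatim — every $\bigcap_{j\in I}L_j$ with $\emptyset\neq I\subseteq\{1,\ldots,m\}$ is empty or linear, so inclusion--exclusion gives
$$D_L(n)=\sum_{\emptyset\neq I\subseteq\{1,\ldots,m\}}(-1)^{|I|+1}\,D_{\bigcap_{j\in I}L_j}(n),$$
a fixed finite $\mathbb{Z}$-linear combination of densities, each convergent by the previous paragraph (with limit $0$ when the intersection is empty). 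A finite linear combination of convergent sequences converges, so $\lim_{n\to\infty}D_L(n)$ exists, which is the assertion.

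There is no real obstacle here: the proof is a routine transcription of the $RT_1$ arguments. The single point that must not be overlooked is that, outside the $RT_1$ case, the normalising denominator $c_\be(n)$ is sensitive to a constant shift of its argument; the needed asymptotics $c_\be(n-k)/c_\be(n)\to\rho^{k}$ is precisely the content of $c_\be(n)\in RT_\rho$, and the hypothesis $\rho<1$ guarantees that $\rho$ lies strictly inside the disc of convergence of $S$, so that $S(\rho)$ makes sense.
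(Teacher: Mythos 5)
Your proposal is correct and follows essentially the same route as the paper: reduce the general linear set to the $a_i=0$ case handled by the preceding Schur-type lemma, use $c_\be(n)\in RT_\rho$ to account for the constant norm shift (here you are in fact more careful than the paper, which writes $D_L(n+k)=D_{L'}(n)$ without the factor $c_\be(n-k)/c_\be(n)\to\rho^k$ that you correctly supply), and then pass to semilinear sets via closure of linear sets under intersection and inclusion--exclusion.
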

\begin{proof}
Let $L'=\{\al=\om^{r+1}\cdot \al_0+\om^r\cdot (b_r\cdot l_r)+\cdots+\om^0\cdot (b_0 \cdot l_0):(\forall i\leq r)[l_i<\om]\wedge \al_0<\be \}$.
Then $\al=\om^{r+1}\cdot \al_0+\om^r\cdot (a_r+b_r\cdot l_r)+\cdots+\om^0\cdot (a_0+b_0 \cdot l_0)\in L$ has norm $n$ if
$\al'=\om^{r+1}\cdot \al_0+\om^r\cdot (b_r\cdot l_r)+\cdots+\om^0\cdot (b_0 \cdot l_0)\in L'$ has norm $n-N(\om^r\cdot a_r)-\ldots-N(\om^0\cdot a_0)$.
Hence $D_L(n+(r+1)\cdot a_r+\cdots +a_0)=D_{L'}(n)$.

Since ${c_\be(n)}\in RT_\rho $ we see 
$c_\be(n+(r+1)\cdot a_r+\cdots +a_0)\sim \rho^{(r+1)\cdot a_r+\cdots +a_0}\cdot c_\be(n)$ as $n\to\infty$.

Therefore the previous results easily carry over from $L'$ to $L$.

Let us now consider semi linear subsets of $\be=\om^{\ga}$.

\begin{lem} Let $\om^\om\leq \be=\om^\ga<\eo$. If $L$ and $L'$ are linear subsets of $\be$ then $L\cap L'$ is either empty or again a  linear set.
The same conclusion holds for any finite intersection of linear sets.
\end{lem}
Proof. The proof from the last section carries over immediately.\hfill $\Box$

\begin{lem} Let $\om^\om\leq \be=\om^\ga<\eo$.  Then limit densities exist for all semi linear subsets of $\be.$
\end{lem}
Proof. Standard limits distribute over finite sums. 
The counting function for a given semi linear set can be calculated as a finite sum counting functions of linear sets (a sum with possibly negative integer coefficients).
This follows again from the inclusion exclusion principle.
\end{proof}

Finally let us consider a general $\be$ of the form $\be=\om^{\ga_1}\cdot d_1+\cdots \om^{\ga_s}d_s$.
Then the set of ordinals $\al$ less than $\be$ can be written as a disjoint union
over sets $L_{j,k_j}:=\{\al=\om^{\ga_1}\cdot d_1+\cdots \om^{\ga_j}(d_j-k_j)+\de:\de<\om^{\ga_j}\}$.

Then $\al=\om^{\ga_1}\cdot d_1+\cdots \om^{\ga_j}(d_j-k_j)+\de\in L_{j,k_j}\iff \de<\om^{\ga_j}$.

Since $c_\be(n)\in RT_\rho$ we find that the density for $L$ exists for all linear sets.

Since by the same proof as before also in this situation the intersection of linear sets is either empty or again a linear set we have proved.

\begin{theo}
Let $\eo\leq\be=\om^\ga$. Then  limit densities exist for all semi linear subsets of $\be.$
\end{theo}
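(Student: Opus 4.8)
The plan is to repeat the strategy of Sections 3 and 4 verbatim, with the sole change that the appeals to Compton's theorem 4.2 in \cite{Burris} are replaced throughout by the Schur-theorem computation of the two lemmas just established, which already covers the case in which some of the $b_i$ vanish.

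First I would dispose of the case $\be=\om^\ga$. By the lemma built on Schur's theorem, $\lim_{n\to\infty}D_L(n)=1/S(\rho)$ exists for every linear $L\subseteq\be$ with all $a_i=0$ (whether or not some $b_i=0$), and by the following reduction lemma the identity $D_L(n+(r+1)a_r+\cdots+a_0)=D_{L'}(n)$ together with $c_\be(n)\in RT_\rho$ --- which forces $c_\be(n+c)\sim\rho^{c}\,c_\be(n)$ for a constant shift $c$ --- transfers existence of the limit to linear sets with arbitrary $a_i$. Recall also that a linear set of length $r$ may be rewritten with any larger length $s$ by putting $a_l=0$, $b_l=1$ for $r<l\le s$; so in any finite family of linear sets we may assume a common length, the padding coordinates contributing only trivial factors $1+\cdots+x^{(l+1)\cdot0}=1$ to the series $S$.

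Next I would pass from linear to semi-linear sets. Writing a semi-linear $L$ as a finite union $\bigcup_{i\in I}L_i$ of linear sets, the inclusion--exclusion principle expresses $\#\{\al\in L:N\al=n\}$ as a finite $\mathbb Z$-linear combination of the counting functions $\#\{\al\in\bigcap_{i\in J}L_i:N\al=n\}$, $J\subseteq I$. By the lemma that a finite intersection of linear sets is empty or again linear, each of these is a linear set, for which the limit density exists by the first step. Dividing by $c_\be(n)$ and using that ordinary limits are additive over finite sums, $\lim_{n\to\infty}D_L(n)$ exists. Finally, for a general $\be=\om^{\ga_1}d_1+\cdots+\om^{\ga_s}d_s$ with $\eo\le\be$ one argues exactly as in the previous sections: the ordinals below $\be$ decompose into the disjoint blocks $L_{j,k_j}$, and within a block a linear subset of $\be$ is, after subtracting the fixed norm contribution of the leading part, in norm-preserving correspondence (up to a constant shift, again absorbed by $RT_\rho$) with a linear subset of some $\om^{\ga_j}$; since intersections of linear sets again stay linear here, a further application of inclusion--exclusion reduces everything to the already-settled case $\be=\om^{\ga_j}$.

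The only point that really requires care --- and the reason this theorem is stated separately from the $RT_1$ results of Sections 3 and 4 --- is that once a finite intersection forces some coordinate $b_i$ to $0$, one may no longer conclude that the corresponding linear piece has density $0$; instead the factor $\prod_{i:b_i=0}\frac{1}{1-x^{i}}$ must be kept in $S(x)$, and one must check that $S$ still has radius of convergence at least $1$, so that $S(\rho)$ is defined and nonzero and Schur's theorem applies, yielding a generally nonzero limit. This is exactly what the preceding lemma provides, so no new analytic input beyond it is needed.
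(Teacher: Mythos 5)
Your proposal is correct and follows essentially the same route as the paper: the paper's own proof environment for this theorem is in fact left empty, the argument being exactly the chain of preceding lemmas (Schur's theorem for linear sets including the $b_i=0$ case, the $RT_\rho$ shift argument for nonzero $a_i$, closure of linear sets under finite intersection, and inclusion--exclusion) that you reassemble. Your remark that the factors coming from coordinates with $b_i=0$ must be retained in $S(x)$ and handled by Schur's theorem rather than killed off via Compton's theorem is precisely the point the paper itself flags as the difference from the $RT_1$ sections.
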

\begin{proof}
\end{proof}

Let ${\mathbb Q}(\rho)$ be the least field containing $\rho$.
\begin{theo} Let $\eo\leq\be=\om^\ga$. Then the limit densities are elements of ${\mathbb Q}(\rho)$ for all semi linear subsets of $\be.$
\end{theo}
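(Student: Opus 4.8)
The plan is to mirror the structure of the preceding existence proof for limit densities and simply track which number field each intermediate quantity lives in. The key observation is that every limiting density produced in this section arises, ultimately, from an application of Schur's theorem, and the value delivered there for a linear set is $\frac{1}{S(\rho)}$ where $S(x)$ is a polynomial in $x$ with \emph{integer} coefficients (a product of factors $1+\cdots+x^{(i+1)(b_i-1)}$ together with factors $\frac{1}{1-x^i}$ in the $b_i=0$ case). Hence $S(\rho)$ is obtained from $\rho$ by finitely many additions, multiplications and divisions, so $S(\rho)\in{\mathbb Q}(\rho)$, and since ${\mathbb Q}(\rho)$ is a field, also $\frac{1}{S(\rho)}\in{\mathbb Q}(\rho)$ (one must note $S(\rho)\neq 0$, which holds because $S$ has radius of convergence at least one, indeed the polynomial part is nonvanishing at the positive real $\rho<1$ and the $\frac{1}{1-x^i}$ factors are positive there).

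First I would record that for a linear set $L$ with all relevant data $a_r,b_r,\ldots,a_0,b_0$, the density $\lim_{n\to\infty}D_L(n)$ equals $\frac{1}{S(\rho)}$ by the Lemma of this section (the shift by $a_i$'s only introduces an extra factor $\rho^{(r+1)a_r+\cdots+a_0}\in{\mathbb Q}(\rho)$, as shown in the proof carrying results over from $L'$ to $L$). Next I would invoke the inclusion–exclusion reduction already used to prove the existence theorems: the counting function of an arbitrary semi linear set is a ${\mathbb Z}$-linear combination of counting functions of linear sets (obtained by intersecting, where intersections of linear sets are again linear by the relevant Lemma), so its limiting density is the same ${\mathbb Z}$-linear combination of the linear densities. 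A ${\mathbb Z}$-linear combination of elements of the field ${\mathbb Q}(\rho)$ is again in ${\mathbb Q}(\rho)$. Finally, for a general $\be=\om^{\ga_1}\cdot d_1+\cdots+\om^{\ga_s}d_s$, the decomposition into the pieces $L_{j,k_j}$ and the shift argument (again multiplying by a power of $\rho$, using $c_\be(n)\in RT_\rho$) keeps everything inside ${\mathbb Q}(\rho)$, so the assertion follows for all semi linear subsets of every such $\be$.

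The step I expect to require the most care is not the field-membership bookkeeping itself — that is routine once one notices $S$ has integer coefficients — but rather making sure that in the $b_i=0$ case the factor $\frac{1}{1-x^i}$ is handled honestly: here $S$ is no longer a polynomial, so one must check that its value at $\rho$ is still a well-defined nonzero element of ${\mathbb Q}(\rho)$, which is immediate since $0<\rho<1$ gives $1-\rho^i\neq 0$ and $1-\rho^i\in{\mathbb Q}(\rho)$. One should also be slightly careful that the shifts by $N(\om^r\cdot a_r)+\cdots+N(\om^0\cdot a_0)$ and by the leading part $N$-value in the general-$\be$ case contribute only integer powers of $\rho$, so that the exponents are honest integers and the resulting factor genuinely lies in ${\mathbb Q}(\rho)$ rather than in some extension.

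\begin{proof}
By the Lemma above, for a linear set $L$ with data $a_r,b_r,\ldots,a_0,b_0$ the limiting density is
$\rho^{(r+1)\cdot a_r+\cdots+a_0}\cdot\frac{1}{S(\rho)}$,
where $S(x)=\prod_{i:b_i>0}(1+\cdots+x^{(i+1)(b_i-1)})\cdot\prod_{i:b_i=0}\frac{1}{1-x^i}$.
Since $0<\rho<1$ we have $1-\rho^i\neq0$ for all $i\geq1$, so $S(\rho)$ is obtained from $\rho$ by finitely many field operations; as the polynomial factors take a positive value at $\rho$ and the remaining factors are positive, $S(\rho)\neq0$. Hence the density of any linear set lies in ${\mathbb Q}(\rho)$.
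For an arbitrary semi linear set the counting function is, by the inclusion exclusion principle together with the fact that intersections of linear sets are again linear (or empty), a finite sum with integer coefficients of counting functions of linear sets; taking the limit, its limiting density is the same integer combination of elements of ${\mathbb Q}(\rho)$, hence again in ${\mathbb Q}(\rho)$.
Finally, for $\be=\om^{\ga_1}\cdot d_1+\cdots+\om^{\ga_s}d_s$, decomposing into the pieces $L_{j,k_j}$ and using $c_\be(n)\in RT_\rho$ to absorb the fixed leading $N$-value as a factor $\rho^{N(\om^{\ga_1}\cdot d_1+\cdots+\om^{\ga_j}(d_j-k_j))}\in{\mathbb Q}(\rho)$, we see all densities stay in ${\mathbb Q}(\rho)$.
\end{proof}
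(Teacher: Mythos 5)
Your proposal is correct and follows exactly the same route as the paper, which merely states in three lines that the linear-set densities coming from Schur's theorem lie in ${\mathbb Q}(\rho)$ and that semi linear densities are integer combinations of these; you supply the supporting details (integrality of the coefficients of $S$, nonvanishing of $S(\rho)$, the $\rho$-power shift factors) that the paper leaves implicit. No gaps.
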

\begin{proof}
The limit densities resulting in our setting from Schur's theorem for linear sets are in ${\mathbb Q}(\rho)$ .
The resulting limiting densities for semi linear sets are formed by taking finite sums with integer coefficients.
This yields the assertion.\end{proof}

Finally let us briefly discuss the case where $\be$ is the Howard Bachmann ordinal.
Every ordinal from the standard notation system for the Howard ordinal (when it is built on the $\vartheta$ function) can be 
we written as a finite multiset over terms of the form $D_0\al$ and $D_1\al$ where $\al$ is again such an ordinal. (See, for example, \cite{Buchholz} for a proof.)
This means that in the Flajolet Sedgewick notation \cite{Flajolet} we find
$OT=\mathrm{Mult}(\{D_0,D_1\}\times OT)$.
Every countable ordinal in $OT$ can be written a multiset over terms of the form $D_0\al$ with $\al\in OT$.
This means that for counting these numbers we need to count the class $CT=\mathrm{Mult}(\{D_0\}\times OT)$.
For the induced generating functions this means that $OT(x)=\exp (\sum_{i=1}^\infty \frac{x^{i\cdot 2}\cdot OT(x^i)}{i})$ and that
$CT(x)=\exp (\sum_{i=1}^\infty \frac{x^{i }\cdot OT(x^i)}{i})$.
These will be tree generating functions with radius of convergence $<1$ so that our analysis applies.

\section{Monadic second order (Cesaro) limit laws for natural well orderings}
Let $\sgn(\al_0)$ be $1$ if $\al_0>0$ and let $\sgn(\al_0)$ be $0$ otherwise.
Let us recall the well known theorem by B\"uchi (theorem 4.8 in \cite{Buchi}) on the countable spectrum 
of monadic second order sentences.

\begin{theo}
Let $\varphi$ be a monadic second order sentence in the language of linear orders.
Then there exists a finite number $r$ and there exist a finite set $K$, an element $a\in K$,
a subset $W\subseteq K$ and operations $F_0,\ldots,F_{r+1}$ on $K$ such that for
all countable ordinals $\al$ of the
form $\al=\om^{r+1}\cdot \al_0+\om^r\cdot k_{r}+\ldots+\om^0\cdot k_0$ we have the
equivalence:

$\al\models \varphi $ iff
$F_0^{k_0}\cdots F_{r}^{k_r}F_{r+1}^{\sgn(\al_0)}(a)\in W$.

Moreover $F_{r+1}^2=F_{r+1}$ and $i<j\leq r$ yields $F_jF_i=F_j$.
\end{theo}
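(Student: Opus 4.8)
The plan is to reduce the statement to a standard Ehrenfeucht--Fra\"iss\'e / composition argument for monadic second order logic over linear orders, exactly in the style of B\"uchi's original automata-theoretic treatment of $\om_1$. First I would fix the quantifier rank $q$ of $\varphi$ and let $K$ be the (finite) set of $q$-Hintikka types (equivalently, the set of winning conditions in the $q$-round EF game, or the states of the corresponding B\"uchi tree automaton on ordinal words) of linear orders in the language $L_<$. Finiteness of $K$ is the classical fact that, up to equivalence in $\mathrm{MSO}$ of quantifier rank $q$, there are only finitely many theories. Set $a$ to be the $q$-type of the one-point order (or of $\om^0=1$, i.e.\ the order underlying the ordinal interval corresponding to a single summand $\om^0$), and let $W\subseteq K$ be the set of $q$-types that entail $\varphi$.

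The key step is the additivity of $q$-types under ordered sums: the $q$-type of a sum $\sum_{i\in I}L_i$ of linear orders, indexed by a linear order $I$, is determined by the $\mathrm{MSO}$-type of $I$ with the summand-types as a colouring. Specializing to the situation at hand, I would define $F_j$ for $j\le r$ to be the operation on $K$ that sends the $q$-type $t$ of an initial segment to the $q$-type of that segment followed by one more block of order type $\om^j$; and $F_{r+1}$ to send $t$ to the $q$-type of $\om^{r+1}\cdot\al_0$ prepended to a segment of type $t$, which by the same composition argument depends only on whether $\al_0>0$, i.e.\ only on $\sgn(\al_0)$ (here one uses that $\om^{r+1}\cdot\al_0$ for any $\al_0\ge 1$ has the same $q$-type, because $\om^{r+1}\cdot\al_0\equiv_q\om^{r+1}$ once $q$ is fixed and $r+1$ is large enough relative to $q$ — this is where the hypothesis that $r$ depends on $\varphi$ is used). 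Then unwinding the definition of $T(\al)$ and reading the Cantor normal form $\al=\om^{r+1}\al_0+\om^r k_r+\cdots+\om^0 k_0$ from the bottom block upward, an induction on $k_0,\dots,k_r$ gives that the $q$-type of (the order underlying) $\al$ equals $F_0^{k_0}\cdots F_r^{k_r}F_{r+1}^{\sgn(\al_0)}(a)$, so $\al\models\varphi$ iff this element lies in $W$.

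Finally the two algebraic identities: $F_{r+1}^2=F_{r+1}$ holds because applying $F_{r+1}$ twice prepends $\om^{r+1}$ to something that already begins with $\om^{r+1}$, and $\om^{r+1}+\om^{r+1}\cdot\al_0=\om^{r+1}\cdot(1+\al_0)$ has the same $q$-type as $\om^{r+1}\cdot\al_0$ whenever the latter is nonempty, while if $\al_0=0$ one checks $F_{r+1}(a)=F_{r+1}(F_{r+1}(\cdot))$ directly from idempotency of $\sgn$; and for $i<j\le r$ the relation $F_jF_i=F_j$ holds because, after one block of type $\om^j$ has been appended at the end, appending a smaller block $\om^i$ in front of it (i.e.\ between the already-built part and the $\om^j$-block) changes nothing up to $q$-type, since $\om^i+\om^j$ and $\om^j$ are $\equiv_q$-equivalent and the composition theorem propagates this equivalence through the surrounding context. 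The main obstacle I expect is bookkeeping: making the order of application of the $F_j$ match the order in which the CNF blocks sit inside the linear order $\al$ (the $\om^r$-block comes \emph{first}, the $\om^0$-block \emph{last}, yet the formula applies $F_0$ innermost), and being careful that the fixed quantifier rank $q$ forces the choice of $r$ — not the other way round — so that all the "$\om^{j}\cdot(\text{stuff})\equiv_q\om^j$" collapses are legitimate. None of this is deep; it is precisely B\"uchi's argument, and I would cite theorem 4.8 of \cite{Buchi} for the statement as given.
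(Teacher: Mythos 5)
The paper offers no proof of this statement: it is imported verbatim as Theorem 4.8 of B\"uchi--Siefkes \cite{Buchi}, and your closing remark --- that you would simply cite that theorem --- is exactly what the paper does. So there is no internal argument to compare your sketch against; I can only assess it on its own terms.

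Taken on its own, your composition-method outline is the standard modern route to B\"uchi's result and is broadly sound, but you should be clear about where the actual content sits. The finiteness of the set $K$ of rank-$q$ types, the additivity of types under ordered sums, and the identity $F_jF_i=F_j$ for $i<j\le r$ are all routine (the last one because $\om^i+\om^j=\om^j$ holds as an equality of ordinals, not merely up to $\equiv_q$, so no composition argument is even needed there). What is \emph{not} routine is the fact you invoke to define $F_{r+1}$: that for $r$ chosen large enough relative to $q$ one has $\om^{r+1}\cdot\al_0\equiv_q\om^{r+1}$ for every countable $\al_0\ge 1$, and likewise $\om^{r+1}\cdot 2\equiv_q\om^{r+1}$ (which is what $F_{r+1}^2=F_{r+1}$ encodes). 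This is the heart of B\"uchi's theorem; it requires a Ramsey-type factorization over the finite semigroup of $q$-types to produce idempotent $\om$-powers, followed by a transfinite induction through all countable ordinals, and your sketch asserts it rather than derives it. Two smaller bookkeeping points: $a$ must be the type of the \emph{empty} order, not of the one-point order (consider $\al=0$, all $k_i=0$, $\al_0=0$); and in $F_0^{k_0}\cdots F_r^{k_r}F_{r+1}^{\sgn(\al_0)}(a)$ the operation $F_0$ is applied \emph{outermost}, i.e.\ last, which is precisely what makes the ``append blocks from left to right'' reading consistent --- so the worry you raise resolves in your favour, but your description of $F_{r+1}$ as \emph{prepending} is then the odd one out, harmless only because $F_{r+1}$ acts on the base element $a$.
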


So let us assume a monadic second order sentence $\varphi$  in the language of linear orders
is given and choose $p$, $K,a,W,F_0,\ldots,F_{r+1}$ according to B\"uchi's theorem.
Then clearly for every $i$ we find numbers $a_i$ and $b_i$ such that
$F_i^{a_i}=F_i^{a_i+b_i}$ since $F_i:K\to K$ and $K$ is finite.
For $\al=\om^{r+1}\cdot \al_0+\om^{r}\cdot k_{r}+\ldots+\om^0\cdot k_0$
we then find that $\al\models \varphi$ iff
$\overline{\al}\models \varphi$ where
$\overline{\al}=\om^{r+1}\cdot \al_0+\om^r\cdot \overline{k_{r}}+\ldots+\om^0\cdot \overline{k_0}$.
Here $\overline{k_i}=k_i$ if $k_i<a_i$ and $\overline{k_i}=a_i+\mu c: k_i-a_i =c (mod \;b_i)$.
So the spectrum of $\varphi$ consists of a semi linear set for which we have proved
that limit densities (for $\be\geq \om^\om$) or Cesaro limit densities (for $\om\leq\be<\om^\om$) exist.

This yields for infinite ordinals $\be<\om^\om$  a monadic second order Cesaro limit law and
for $\be\geq \om^\om$
a monadic second order limit law.
Moreover the proofs yield that these limits will always be rational numbers when $\be<\eo$.

\section{Weak Monadic second order limit laws for ordinals in the presence of addition and multiplication}
We can define $\langle \om,+,\cdot\rangle$ in the weak monadic second order language over any infinite structure
$\langle \al,+\rangle$.
To define $\cdot$ we use the well known fact that $x$ divides $y$ is definable on the smallest limit element by the following description:
There is a finite set $X$ such that 
$x\in X$ and such that for every not maximal element $v\in X$ we have $v+x\in X$ and  and such that $y$ is the maximal element of $X$ and $x$ is the minimal element in $X$ and
for every element  $v$ in $X$ which is not $x$ there exists a $w\in X$ such that $w+x=v$.
Then we can define squaring over $\om$ by $y,y+1$ both divide $x+y$ and for all $z<x+y$ if $y$ divides z then
 then it is not the case that $y+1$ divides $u$.
Then multiplication is defined by $x\cdot y=z$ if 
there are $u,v,w(u=x^2\wedge v=y^2\wedge w=(x+y)^2\wedge w=u+v+z+z)$.

This yields that no algorithm can separate $\varphi$ ( from the weak monadic second order language)
with $\delta^\beta_n(\varphi)\to 0$ from $\delta^\beta_n(\varphi)\to 1$
for $\be\geq \om^2$.

By additional results of B\"uchi's about the ordinal spectrum of weak monadic second order logc
we obtain from the previous results of this paper weak monadic second order limit laws for $\om\leq \be\leq \eo$.

Weak monadic second order limit laws lead to  first order limit laws with respect to $L(<,+)$ for classes of structures $\{\om^\al: \al<\be\}$ where
$\om\leq \be\leq \eo$. These can be inherited because by a theorem of Ehrenfeucht for any choice of $\de,\de'$ the structure
$\langle \om^{\om^\om\cdot \de+\al}\rangle$ is $WMSO(<,+)$ elementarily equivalent with $\langle \om^{\om^\om\cdot \de'+\al}\rangle$
since $\langle{\om^\om\cdot \de+\al}\rangle$ is $WMSO(<)$ elementarily equivalent with $\langle {\om^\om\cdot \de'+\al}\rangle$ (for a proof confer, e.g. \cite{Rosenstein}).
Moreover this leads to first order limit laws for $L(<,+,\cdot)$ for classes of structures $\{\om^{\om^\al}:\al<\be\}$ where
$\om\leq \be\leq \eo$. This is again a consequence of another result of Ehrenfeucht since the structure
$\langle{\om^{\om^\om\cdot \de+\al}}\rangle$ is $WMSO(<,+)$ elementarily equivalent with $\langle {\om^{\om^\om\cdot \de'+\al}}\rangle$.

{\bf Final remarks:}  

\begin{enumerate}
\item The corresponding results for the multiplicative setting which are defined relative to the Matula coding (when ordinals not exceeding $\eo$ are concerned) follow from our previous analysis together with
Theorem 9.53 
(the multiplicative version
of Schur's theorem) 
and Theorem 8.30 
(the multiplicative version of Hua's theorem) 
 in \cite{Burris}.
\item We have shown monadic second order limit laws for $\be\geq\eo$  in the additive
setting. We intend to investigate whether similar results holds in the
multiplicative setting.
\item A very exciting extension of our work concerns logical limit laws for
uncountable ordinals. Corresponding density notions can be induced by working with
ordinal notation systems for the Bachmann Howard ordinal and farer reaching notation systems. We believe that 
monadic second order limit laws will hold for all ordinals (including the
uncountable ones) from such a notation system.
For this B\"uchi's theorem for ordinals less than $\om_2$ seems to be applicable
\cite{BuchiZaiontz}. Recent results by Itay Neeman \cite{Neemana,Neemanb} seem to pave the way to study limit laws for ordinals above  $\om_2$ but we quit at this point.
\end{enumerate}


\begin{thebibliography}{1}


\bibitem{Bella} J.P.~Bell. Sufficient conditions for zero-one laws. Trans. Amer. Math. Soc. 354 (2002), no. 2, 613--630
\bibitem{Bell} J.P. Bell, S.N. Burris/ { Asymptotics for logical limit laws: when the growth of the components is in an RT class.} 
Trans. Amer. Math. Soc. 355 (2003), no. 9, 3777--3794.
\bibitem{Bellb} J.P.~Bell, S.~Burris, K.~Yeats.
Counting rooted trees: the universal law $t(n)\sim C?^n\cdot n^{?3/2}$.
Electron. J. Combin. 13 (2006), no. 1, Research Paper 63, 64 pp.
\bibitem{Bondt} B.~De Bondt. Sets, Trees and Incompleteness. Master thesis. Ghent University 2018.
\bibitem{Buchholz} W.~Buchholz. A survey on ordinal notations around the Bachmann-Howard ordinal. Advances in proof theory, 1--29, Progr. Comput. Sci. Appl. Logic, 28, Birkh{\"a}user, Springer, [Cham], 2016. 
\bibitem{Buchi}
J.~R.~B\"uchi, D.~Siefkes:
{\em The monadic second order theory of all countable ordinals.} 
Lecture Notes in Mathematics. 328. Decidable Theories II. Berlin-Heidelberg-New York: Springer-Verlag. (1973). 
\bibitem{BuchiZaiontz} J.~R.~B{\"u}chi,  C.~Zaiontz:
{\em Deterministic automata and the monadic theory of ordinals $\om_2$}.
Z. Math. Logik Grundlagen Math. 29, 313-336 (1983). 
\bibitem{Burris} S.N. Burris: {\em Number Theoretic
Density and Logical Limit Laws.} Mathematical Surveys and Monographs 86. American Mathematical Society, Providence, RI, 2001. xx+289 pp
\bibitem{Flajolet} P.~Flajolet, R.~Sedgewick, Analytic combinatorics. Cambridge University Press, Cambridge, 2009. xiv+810 pp. ISBN: 978-0-521-89806-5 
American Mathematical Society 2001.
\bibitem{Leroux} G.~Labelle, C.~Lamathe, P.~Leroux. Enumeration des 2-arbres k-gonaux. Mathematics and computer science, II (Versailles, 2002), 95--109, Trends Math., Birkh{\"a}user, Basel, 2002.
\bibitem{Neemana} I.~Neeman:
{\em Finite state automata and monadic definability of singular cardinals.} 
J. Symbolic Logic 73 (2008), no. 2, 412--438. 
\bibitem{Neemanb} I.~Neeman:
{\em Monadic definability of ordinals.} Computational prospects of infinity. Part II. Presented talks, 193--205,
Lect. Notes Ser. Inst. Math. Sci. Natl. Univ. Singap., 15, World Sci. Publ., Hackensack, NJ, 2008. 
\bibitem{Otter} R.~Otter.
The number of trees.
Ann. of Math. (2) 49 (1948), 583--599. 
\bibitem{Rosenstein} J. Rosenstein. Linear Orderings.
Academic Press 1982.
\bibitem{Schuette} K.~Sch{\"u}tte.
Proof theory.
Translated from the revised German edition by J. N. Crossley. Grundlehren der Mathematischen Wissenschaften, Band 225. Springer-Verlag, Berlin-New York, 1977. xii+299 pp.
\bibitem{Shelah} S. Shelah: The Monadic Theory of Order
Saharon Shelah
The Annals of Mathematics , Second Series, Vol. 102, No. 3 (Nov., 1975), pp. 379-419 

\bibitem{vanzurGathen} J.von zur Gathen, J.~Gerhard:
{\em Modern computer algebra.}
Third edition. Cambridge University Press, Cambridge, 2013.

\bibitem{Weiermann} A. Weiermann: {\em A zero one law characterization of $\eo$.}
Mathematics and Computer Science II. Proceedings of the Colloquium on
Algorithms, Trees, Combinatorics and Probabilities. Birkh\"auser (2002),
B. Chauvin, P. Flajolet, D. Gardy and A. Mokkadem (ed.),
527-539.


\bibitem{WeiermannWoods} A.~Weiermann, A.~R.~Woods:
{\em Some natural zero one laws for ordinals below $\varepsilon_0$}.
Cooper, S. Barry (ed.) et al., How the world computes. Turing centenary conference and 8th conference on computability in Europe, CiE 2012, Cambridge, UK, June 18–23, 2012. Proceedings. Berlin: Springer. Lecture Notes in Computer Science 7318, 723-732 (2012). 

\bibitem{Woods} A.R. Woods: {\em Coloring rules for finite trees, and probabilities of
              monadic second order sentences.} Random Structures
              Algorithms, 10 (4), (1997) 453-485.

\end{thebibliography}
\end{document}